\numberwithin{equation}{section}
\theoremstyle{plain}
\newtheorem{theorem}{Theorem}[section]
\newtheorem{proposition}[theorem]{Proposition}
\newtheorem*{proposition*}{Proposition}
\newtheorem{lemmma*}{Lemma}
\theoremstyle{definition}
\newtheorem{definition}[theorem]{Definition}
\newtheorem*{definition*}{Definition}
\newtheorem{remark*}{Remark}
\newcommand{\GL}{{\rm GL}}
\newcommand\csum[1]{\mathscr #1}
\newcommand{\Reg}{\mathrm{Reg}}
\newcommand{\trace}{\mathrm{Tr}}
\newcommand{\Irr}{\mathrm{Irr}}
\newcommand{\Cl}{\mathrm{Cl}}
\newcommand{\des}{\mathrm{des}}
\newcommand{\CF}{\mathrm{CF}}
\newcommand\ol[1]{\overline{#1}}
\newcommand{\hook}{\eta}
\newcommand{\ld}{\ell}
\newcommand\seq{\mathfrak a}
\newcommand\emptysq{[\,\text{-}\, , \,\text{-}\,]}
\begin{document}
\title{On Foulkes characters}
\author[Alexander~R.~Miller]{Alexander~R.~Miller}
\address{United States of America}
\begin{abstract}
  Orthogonality relations for Foulkes characters
  of full monomial groups are presented, along with three solutions to the
  problem of decomposing products of these characters,
  and new applications, including a product reformulation of
  a Markov chain for adding random numbers studied by Diaconis and Fulman, 
  and a new proof of a theorem of Zagier which generalizes
  one of Harer and Zagier on the enumeration of
  Riemann surfaces of a given genus. 
\end{abstract}
\maketitle
\thispagestyle{empty}
\section{Introduction}
Let $\ell(\pi)$ denote the number of cycles of a permutation $\pi\in S_n$. 
Let $\phi_0,\phi_1,\ldots,\phi_{n-1}$ be the Foulkes characters of $S_n$,
so  
$\phi_i$ is afforded by the sum of Specht modules $V_\beta$ with $\beta$ of border shape with $n$ boxes and $i+1$ rows.
For history and properties, see Chapter 8 of Kerber's book \cite{Kerber}.
Our starting point is the classical fact that the $\phi_i$'s depend only on length in the sense that
\begin{equation}\label{length property}
  \phi_i(\sigma)=\phi_i(\tau)\quad\text{whenever}\quad \ell(\sigma)=\ell(\tau),
\end{equation}
and in fact the $\phi_i$'s form a basis for the space $\CF_\ell(S_n)$ of all class functions $\vartheta$ that depend only on $\ell$, with
each $\vartheta\in \CF_\ell(S_n)$ decomposing uniquely as 
\begin{equation}\label{decomposition Sn}
  \vartheta =\sum_{i=0}^{n-1} \frac{\langle \vartheta,\epsilon_i\rangle}{\epsilon_i(1)}\phi_i,
\end{equation}
where $\epsilon_i$ is the irreducible character $\chi_\lambda$ for the hook shape $\lambda=(n-i,1^i)$, so $\epsilon_i(1)=\binom{n-1}{i}$.
Other important facts about the $\phi_i$'s include: They decompose the character $\rho$ of the regular representation:
\begin{equation}\label{reg decomposition}
  \phi_0+\phi_1+\ldots+\phi_{n-1}=\rho.
\end{equation}
Their degrees are Eulerian numbers:
\begin{equation}\label{Eulerian degrees}
  \phi_i(1)=|\{\pi\in S_n \mid \des(\pi)=i\}|,\quad \des(\pi)=|\{i \mid \pi(i)>\pi(i+1)\}|. 
\end{equation}
They branch according to 
\begin{equation}\label{Sn branching rule}
  \phi_i|_{S_{n-1}}=(n-i)\phi_{i-1}+(i+1)\phi_i.
\end{equation}
And they even admit a closed-form expression:
\begin{equation}\label{Sn expression}
  \phi_i(\pi)=\sum_{j=0}^{n-1} (-1)^{i-j}\binom{n+1}{i-j}(j+1)^{\ell(\pi)}.
\end{equation}
But two questions remain unanswered.
\begin{enumerate}[\it {Question} 1.\ ]
\item\label{Q1} How does a product $\phi_i\phi_j$ decompose into a sum of $\phi_k$'s?
\item\label{Q2} What is the inner product $\emptysq$ with respect to which the $\phi_i$'s form an orthonormal basis?
\end{enumerate}
We answer both of these questions in the next section. 

For decomposing products, we present 3 solutions. The first is a combinatorial solution
which follows from a recent result that interprets the values $\phi_i(\pi)$ as coefficients of Loday's Eulerian idempotents from cyclic homology \cite{Loday} in
certain sums in the group algebra $\mathbb C[S_n]$. 
The second solution is an explicit closed-form solution using
\eqref{Sn expression}. The third solution is perhaps the most surprising, being 
a recursive solution given by Delsarte in 1976 in a context void of characters and groups,
and given 4 years before the $\phi_i$'s were introduced by Foulkes in 1980.
Delsarte's work, which had been overlooked up to now, adds yet another surprising place where Foulkes characters arise.

A few years ago, Diaconis and Fulman connected the $\phi_i$'s with adding random numbers \cite{DF1}.
Denote by $\Phi$ the character table
\[\Phi=\left(\phi_i(C_{n-j})\right)_{0\leq i,j\leq n-1},\]
where
\[C_i=\{\pi\in S_n \mid \ell(\pi)=i\}\]
and for any $\vartheta\in \CF_\ell(S_n)$ we denote by $\vartheta(C_i)$ the value $\vartheta(\pi)$ for any ${\pi\in C_i}$.
Holte \cite{Holte} studied the carries that occur when adding $n$ random numbers in base~$b$, particularly the Markov chain with transition matrix
\begin{equation*}
  M=(M(i,j))_{0\leq i,j\leq n-1}
\end{equation*}
given by 
\begin{equation}\label{carries matrix}
  M(i,j)=\text{chance}\{\text{next carry is $j$} \mid \text{last carry is $i$}\}.
\end{equation}
Diaconis and Fulman found that the transposed columns of $\Phi$ are left eigenvectors, in particular
\[\Phi^tM=D\Phi^t,\]
where $D={\rm diag}(b^{0},b^{-1},\ldots,b^{-n+1})$.

We consider not adding random numbers and keeping track of carries, but multiplying random $n$-cycles in $S_n$ and counting factorizations.
Let $\sigma$ and $\tau$ be $n$-cycles $(i_1\, i_2\, \ldots\, i_n)$ chosen uniformly at random from $C_1$, and consider the expected number of ways
that the product $\sigma\tau$ can be written as a product $\alpha\beta$ with $\alpha\in C_i$ and $\beta\in C_j$, i.e.\
\[{\mathbf E}|\sigma C_i\cap \tau C_j|.\]
Dividing by $n!$ gives a probability distribution on pairs $(C_i,C_j)$, and our answer to Question~\ref{Q2} is that
the $\phi_i$'s form an orthonormal basis with respect to the inner product on $\CF_\ell(S_n)$ defined by 
\[[\vartheta,\psi]=\frac{1}{|S_n|}\sum_{i,j=0}^{n-1} \vartheta(C_i)\ol{\psi(C_j)}{\mathbf E}|\sigma C_i\cap \tau C_j|.\]
As a remarkable consequence, we find that the $\phi_i$'s
arise in a natural way from multiplying random $n$-cycles: 
they result from the inner product $\emptysq$
by applying the Gram--Schmidt process to the natural basis of characters $1^{\ld},2^{\ld},\ldots, n^{\ld}$ in $\CF_{\ell}(S_n)$.
This is analogous to how the irreducible characters of $S_n$ can be obtained by 
taking the usual inner product on class functions of $S_n$, namely
\[\langle \vartheta,\psi\rangle =\frac{1}{|S_n|} \sum_{K\in\Cl(S_n)} \vartheta(K)\ol{\psi(K)} |K|,\]
taking a natural choice of permutation characters indexed by partitions, namely $(1_{S_\lambda})^{S_n}$ with a certain natural order, and applying the Gram--Schmidt process.

As a new application of Foulkes characters,
we give a short proof 
of a celebrated result of Zagier which generalizes one of Harer and Zagier on the enumeration of Riemann surfaces of a given genus.
We also rewrite the Markov chain for carries in terms of our inner product
$[\,\text{-}\, ,\,\text{-}\, ]$ and products of characters in $\CF_\ell(S_n)$:
\[M(i,j)=[\phi_i,b^{\ld-n}\phi_j],\]
which is not generally equal to $M(j,i)$.

In the second part of the paper, Section~\ref{Sect G(r,1,n)}, we
answer Questions~\ref{Q1} and~\ref{Q2} for the full monomial groups
$G(r,1,n)$ with $r>1$.  
The author introduced analogues of Foulkes characters for these groups, as well as many other reflection groups, in \cite{M1}, where they were
constructed from
certain reduced homology groups for subcomplexes of the Milnor fiber complex, 
which is a certain wedge of spheres that is an equivariant strong deformation
retract of a Milnor fiber from the invariant theory of the group,  
and then used various machinery 
to prove, among other things \cite{M1,M2,M3,M4},
analogues of \eqref{length property}--\eqref{Sn expression}. 
The role of $\ell$ is played by
$n-\mathfrak l$, where $\mathfrak l$ is the most natural choice of ``length'',
\[\mathfrak l(x)=\min\{k\geq 0\mid x=y_1y_2\ldots y_k\ \text{for some reflections $y_i\in G(r,1,n)$}\}.\]
In addition to enjoying properties analogous to \eqref{length property}--\eqref{Sn expression}, the Foulkes characters of $G(r,1,n)$ were shown in \cite{M2}
to play the role of irreducibles among the characters of $G(r,1,n)$
that depend only on $\mathfrak l$ in the sense that 
the characters of $G(r,1,n)$ that depend only on $\mathfrak l$ are
precisely the unique non-negative integer linear combinations
of the Foulkes characters. So our answers to Questions~\ref{Q1} and \ref{Q2}
round out a truly remarkable story for the groups $G(r,1,n)$ with $r>1$,
particularly the hyperoctahedral groups $G(2,1,n)$. 

Question~\ref{Q1} for $G(r,1,n)$ has answers that
are similar to our answers for~$S_n$. Question~\ref{Q2} for $G(r,1,n)$ 
is more complicated than for $S_n$, but our answer is of a similar flavor
and simplifies in the case of the hyperoctahedral
group.
As in the case of type A, benefits 
include a probability distribution
on the analogues of the pairs $(C_i,C_j)$, 
and a new construction of the Foulkes characters of
$G(r,1,n)$ 
in terms of multiplying random elements and applying the Gram--Schmidt process  
to a natural basis.
Another application is a remarkable rewriting of 
a Markov chain studied by Diaconis and Fulman for adding random numbers in
balanced ternary, a number system that both reduces carries and,
in the words of Donald Knuth, is ``perhaps the prettiest number system of~all.''

\section{Type A}
\subsection{The inner product}
We start with our answer to Question~\ref{Q2} for $S_n$.
Given a subset $A$ of a group $G$,
we denote by $\csum{A}$ the sum $\sum_{a\in A}a$ in $\mathbb C[G]$.

\begin{definition} For $\vartheta,\psi\in \CF_\ell(S_n)$, and for
  $n$-cycles $\sigma$ and $\tau$ chosen uniformly at random from $C_1$, we define 
\[[\vartheta,\psi]=\frac{1}{|S_n|}\sum_{i,j=1}^n \vartheta(C_i)\overline{\psi(C_j)} \mathbf E |\sigma C_i\cap \tau C_j|.\]
\end{definition}

\begin{proposition}\label{general [,] prop}
  For $\vartheta,\psi\in \CF_\ell(S_n)$,
  \begin{equation}
    [\vartheta,\psi]
    =
    \sum_\chi
    \left\langle \vartheta,\frac{\chi}{\chi(1)}\right\rangle
    \left\langle \overline{\psi},\frac{\chi}{\chi(1)}\right\rangle,
  \end{equation}
  where the sum is over the irreducible characters $\epsilon_i=\chi_{(n-i,1^i)}$, $0\leq i\leq n-1$. 
\end{proposition}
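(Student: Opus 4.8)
The plan is to reduce everything to a single kernel, the expectation $\mathbf E|\sigma C_i\cap\tau C_j|$, compute it in closed form as a character sum, and then watch the hook characters $\epsilon_k$ emerge through the vanishing of irreducible characters on an $n$-cycle. First I would turn the intersection count into a structure constant of the class algebra. An element lies in $\sigma C_i\cap\tau C_j$ precisely when it equals $\sigma u=\tau v$ with $u\in C_i$ and $v\in C_j$; eliminating the element and using that $C_i$ is closed under inversion exhibits $|\sigma C_i\cap\tau C_j|$ as the number of ways to factor $w:=\tau^{-1}\sigma$ as a product of an element of $C_i$ and an element of $C_j$, i.e.\ as the coefficient of $w$ in the product of class sums $\csum{C_i}\,\csum{C_j}$ in $\mathbb C[S_n]$. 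Both $\csum{C_i}$ and $\csum{C_j}$ are central, being sums of honest conjugacy-class sums.

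Next I would take the expectation. As $\sigma,\tau$ range uniformly and independently over $C_1$, the element $w=\tau^{-1}\sigma$ is distributed as a product of two independent uniform $n$-cycles, and the number of pairs $(\sigma,\tau)$ producing a given $w$ is the coefficient of $w$ in $\csum{C_1}^{\,2}$. Pairing the two coefficient sequences and using $C_1=C_1^{-1}$ (so that $\csum{C_1}^{\,2}$ is fixed by $g\mapsto g^{-1}$), the whole expectation collapses to a single identity-coefficient: writing $\kappa(z)$ for the coefficient of the identity in $z\in\mathbb C[S_n]$,
\[
\mathbf E|\sigma C_i\cap\tau C_j|=\frac{1}{|C_1|^{2}}\,\kappa\!\left(\csum{C_i}\,\csum{C_j}\,\csum{C_1}^{2}\right).
\]

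I would then expand $\kappa$ over $\Irr(S_n)$ via $\kappa(z)=\frac{1}{|S_n|}\sum_\chi\chi(1)\,\trace\rho_\chi(z)$. Since all three factors are central, each acts on the $\chi$-block by a scalar — $\csum{C_i}$ by $\frac{1}{\chi(1)}\sum_{g\in C_i}\chi(g)$ and $\csum{C_1}$ by $\frac{|C_1|\chi(c)}{\chi(1)}$ for an $n$-cycle $c$ — and collecting the $\chi(1)$ factors yields
\[
\mathbf E|\sigma C_i\cap\tau C_j|=\frac{1}{|S_n|}\sum_{\chi}\frac{\bigl(\sum_{g\in C_i}\chi(g)\bigr)\bigl(\sum_{g\in C_j}\chi(g)\bigr)}{\chi(1)^{2}}\,\chi(c)^{2}.
\]
Now the crucial input, Murnaghan--Nakayama: $\chi_\lambda(c)=0$ unless $\lambda$ is a hook $(n-k,1^k)$, where $\chi_\lambda(c)=(-1)^k$. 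Hence $\chi(c)^2$ is the indicator of $\chi\in\{\epsilon_0,\dots,\epsilon_{n-1}\}$, so only hooks survive. Substituting this into $[\vartheta,\psi]$ and collapsing the $i$- and $j$-sums by $\sum_i\vartheta(C_i)\sum_{g\in C_i}\epsilon_k(g)=\sum_g\vartheta(g)\epsilon_k(g)=|S_n|\,\langle\vartheta,\epsilon_k\rangle$ (the characters of $S_n$ being real), the factors of $|S_n|$ cancel and one is left with exactly $\sum_k\langle\vartheta,\epsilon_k/\epsilon_k(1)\rangle\langle\overline\psi,\epsilon_k/\epsilon_k(1)\rangle$.

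The main obstacle is the bookkeeping of the middle two steps: correctly identifying the intersection count with a class-algebra structure constant and assembling the expectation into one identity-coefficient, after which the central-character calculus forces the rest. Conceptually, the single structural fact doing the real work is the Murnaghan--Nakayama vanishing of irreducible characters off hook shapes on an $n$-cycle; this is precisely what makes the hook characters $\epsilon_k$, and hence the Foulkes characters, appear.
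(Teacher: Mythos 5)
Your proof is correct and follows essentially the same route as the paper's: both identify $\mathbf E|\sigma C_i\cap\tau C_j|$ with the identity coefficient (equivalently, $\frac{1}{|S_n|}\trace\circ\Reg$) of $\csum{C_1}^{2}\csum{C_i}\csum{C_j}/|C_1|^{2}$, expand over $\Irr(S_n)$ via central characters, and invoke the Murnaghan--Nakayama vanishing of non-hook characters on an $n$-cycle so that only the $\epsilon_k$ survive. The only difference is that you spell out the reduction of the expectation to a class-algebra structure constant, a step the paper's proof asserts without comment.
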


\begin{proof}
 Denoting the regular representation of $S_n$ by $\Reg$, we have
\begin{align*}
  [\vartheta,\psi]
  &=
    \frac{1}{|S_n|^2} \trace\circ\Reg\left(\sum_{i,j=1}^n \vartheta(C_i)\overline{\psi(C_j)}\frac{\csum{C_1}^2\csum{C_i}\csum{C_j}}{|C_1|^2} \right)\\
  &=\frac{1}{|S_n|^2}  \sum_{\chi\in\Irr(S_n)} \sum_{i,j=1}^n \frac{\vartheta(C_i)\overline{\psi(C_j)}}{|C_1|^2} \chi(1)
    \frac{\chi(C_1)^2|C_1|^2}{\chi(1)^2} \sum_{x\in C_i}\frac{\chi(x)}{\chi(1)} \sum_{y\in C_j}\chi(y)\\
  &=\frac{1}{|S_n|^2}  \sum_{\chi\in\Irr(S_n)} \sum_{i,j=1}^n  \chi(C_1)^2
    \sum_{x\in C_i}\frac{\vartheta(C_i) \chi(x)}{\chi(1)} \sum_{y\in C_j}\frac{\overline{\psi(C_j)}\chi(y)}{\chi(1)}\\
  &=\sum_{\chi\in\Irr(S_n)} \chi(C_1)^2 \left\langle \vartheta,\frac{\chi}{\chi(1)}\right\rangle \left\langle \overline{\psi},\frac{\chi}{\chi(1)}\right\rangle \\
  &=\sum_\chi \left\langle \vartheta,\frac{\chi}{\chi(1)}\right\rangle \left\langle \overline{\psi},\frac{\chi}{\chi(1)}\right\rangle,
\end{align*}
where the last sum is over all $\chi_{(n-i,1^i)}$ with $0\leq i\leq n-1$. 
\end{proof}

\begin{theorem}
  The characters $\phi_0,\phi_1,\ldots,\phi_{n-1}$ form an
  orthonormal basis for the Hilbert space $\CF_\ell(S_n)$ with inner product
  $\emptysq$.
\end{theorem}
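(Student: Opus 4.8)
The plan is to leverage the two facts already in hand: the coordinate formula \eqref{decomposition Sn} and the spectral description of $\emptysq$ furnished by Proposition~\ref{general [,] prop}. Since \eqref{decomposition Sn} already asserts that $\phi_0,\ldots,\phi_{n-1}$ form a basis of $\CF_\ell(S_n)$, with each $\vartheta$ decomposing uniquely, the only thing left to verify is that they are orthonormal for $\emptysq$, that is, $[\phi_k,\phi_l]=\delta_{kl}$.

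First I would extract from \eqref{decomposition Sn} the pairings $\langle \phi_k,\epsilon_i\rangle$. Applying the coordinate formula to $\vartheta=\phi_k$ gives $\phi_k=\sum_{i=0}^{n-1}\frac{\langle \phi_k,\epsilon_i\rangle}{\epsilon_i(1)}\phi_i$, and comparing coefficients in the basis $\{\phi_i\}$ forces $\langle \phi_k,\epsilon_i\rangle/\epsilon_i(1)=\delta_{ik}$. In other words, the Foulkes characters are, up to the normalizing degrees $\epsilon_i(1)=\binom{n-1}{i}$, the dual basis to the hook characters $\epsilon_0,\ldots,\epsilon_{n-1}$ under the ordinary inner product $\langle\,\text{-}\,,\,\text{-}\,\rangle$.

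Next I would substitute this into Proposition~\ref{general [,] prop} with $\vartheta=\phi_k$ and $\psi=\phi_l$. Because every character of $S_n$ is rational-valued, $\overline{\phi_l}=\phi_l$, so the coordinate identity of the previous step applies verbatim to $\overline{\phi_l}$ and yields $\langle \overline{\phi_l},\epsilon_i/\epsilon_i(1)\rangle=\delta_{il}$. The sum in the Proposition then collapses to $[\phi_k,\phi_l]=\sum_{i=0}^{n-1}\delta_{ik}\delta_{il}=\delta_{kl}$. Combined with the basis property already recorded in \eqref{decomposition Sn}, this gives the orthonormal basis claim.

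I do not expect a genuine obstacle here, since the substance has been front-loaded into Proposition~\ref{general [,] prop}. The one point requiring care is the conjugation in the second slot of $\emptysq$ and of the Proposition: one must observe that $\phi_l$, and indeed every $S_n$-character, is real, so that $\overline{\phi_l}=\phi_l$ and the decomposition \eqref{decomposition Sn} may be invoked for it without modification. Everything else is a direct, essentially mechanical, reading-off of coefficients.
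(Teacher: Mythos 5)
Your argument is correct and is exactly the paper's proof, which simply cites \eqref{decomposition Sn} and Proposition~\ref{general [,] prop}; you have merely spelled out the mechanical details (the dual-basis relation $\langle\phi_k,\epsilon_i\rangle/\epsilon_i(1)=\delta_{ik}$ and the realness of $\phi_l$). No further comment is needed.
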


\begin{proof}
By property \eqref{decomposition Sn} and Proposition~\ref{general [,] prop}.
\end{proof}

A natural choice of basis for $\CF_\ell(S_n)$ that is composed of characters is
$1^{\ld},2^{\ld},\ldots, n^{\ld}$, the character  
$k^{\ld}:\pi\mapsto k^{\ld(\pi)}$ being afforded by $(\mathbb C^k)^{\otimes n}$
with 
\[\pi.(v_1\otimes v_2\otimes\ldots\otimes v_n)=v_{\pi^{-1}(1)}\otimes v_{\pi^{-1}(2)}\otimes \ldots\otimes v_{\pi^{-1}(n)}.\]

\begin{theorem}
  The characters $\phi_0,\phi_1,\ldots,\phi_{n-1}$
  result from the inner product $\emptysq$
  by applying the Gram--Schmidt 
  process to the 
  characters $1^{\ld},2^{\ld},\ldots, n^{\ld}$.
\end{theorem}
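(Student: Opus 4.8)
The plan is to reduce the statement to the uniqueness of the Gram--Schmidt process: for an ordered basis $v_1,\dots,v_n$ of an inner-product space, Gram--Schmidt produces the unique orthonormal sequence $u_1,\dots,u_n$ for which each $u_k$ lies in $\mathrm{span}(v_1,\dots,v_k)$ and the coefficient of $v_k$ in $u_k$ is real and positive. The preceding theorem already supplies two of the three things I need: that $\emptysq$ makes $\CF_\ell(S_n)$ a genuine (positive-definite) inner-product space, and that $\phi_0,\dots,\phi_{n-1}$ is orthonormal for it. So it remains only to verify the triangularity and the positivity of the leading coefficients, after which uniqueness forces $\phi_{k-1}$ to be the $k$th Gram--Schmidt vector.

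First I would use the closed form \eqref{Sn expression} to rewrite each Foulkes character in the basis $1^{\ld},\dots,n^{\ld}$. Reading \eqref{Sn expression} as an identity of class functions gives
\[
  \phi_i=\sum_{j=0}^{n-1}(-1)^{i-j}\binom{n+1}{i-j}(j+1)^{\ld},
\]
and since $\binom{n+1}{i-j}=0$ for $j>i$ (while the upper constraint never binds in the range $0\le i\le n-1$), the sum truncates to $0\le j\le i$. Hence $\phi_i$ lies in $\mathrm{span}(1^{\ld},\dots,(i+1)^{\ld})$ and the coefficient of the top term $(i+1)^{\ld}$ equals $\binom{n+1}{0}=1$. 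Writing $k=i+1$, this is exactly the assertion that the transition matrix from $(1^{\ld},\dots,n^{\ld})$ to $(\phi_0,\dots,\phi_{n-1})$ is unitriangular; in particular $1^{\ld},\dots,n^{\ld}$ is a basis and $\mathrm{span}(1^{\ld},\dots,k^{\ld})=\mathrm{span}(\phi_0,\dots,\phi_{k-1})$ for every $k$.

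Finally I would assemble the pieces. Fix $k$ and set $W_k=\mathrm{span}(1^{\ld},\dots,k^{\ld})$. By the span identity $W_{k-1}=\mathrm{span}(\phi_0,\dots,\phi_{k-2})$, so orthonormality gives $\phi_{k-1}\perp W_{k-1}$; combined with $\phi_{k-1}\in W_k$ and $[\phi_{k-1},\phi_{k-1}]=1$, this exhibits $\phi_{k-1}$ as a unit vector in the one-dimensional space $W_k\cap W_{k-1}^{\perp}$. Such a unit vector is determined up to a unimodular scalar, and the requirement that the coefficient of $k^{\ld}$ be real and positive fixes that scalar, since here the coefficient equals $1$. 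Therefore $\phi_{k-1}$ coincides with the $k$th vector output by Gram--Schmidt. The argument is essentially routine once \eqref{Sn expression} is available; the one place demanding care is the bookkeeping, namely the index shift $k=i+1$ and checking that the binomial coefficients vanish in precisely the right range so the transition matrix is triangular with the correct orientation.
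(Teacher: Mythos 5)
Your proof is correct and takes essentially the same route as the paper: both arguments use \eqref{Sn expression} to exhibit the transition matrix from $1^{\ld},\ldots,n^{\ld}$ to $\phi_0,\ldots,\phi_{n-1}$ as lower unitriangular and then invoke orthonormality of the $\phi_i$'s under $\emptysq$ to identify them with the Gram--Schmidt output. You simply spell out the uniqueness characterization of Gram--Schmidt (triangular span containment, unit norm, positive leading coefficient) that the paper leaves implicit in its final sentence.
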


\begin{proof}
  By \eqref{Sn expression}, we have  
    $\Phi=LV$ with
  \[
    L=\left((-1)^{i-j}\binom{n+1}{i-j}\right)_{0\leq i,j\leq n-1},\quad
    V=\left((i+1)^{\ld(C_{n-j})}\right)_{0\leq i,j\leq n-1},
  \]
  so $L$ is lower unitriangular and $V$ is the character table of the 
  $k^{\ld}$.
  But this means that the rows of $\Phi$ are obtained by applying the Gram--Schmidt process to
  the rows of $V$ using the inner product with respect to which the
  rows of $\Phi$ are orthonormal. 
\end{proof}

\subsection{}
We remark on a formula for Foulkes characters that is similar to some well-known formulas for various systems of orthogonal polynomials, including Legendre polynomials, Hermite polynomials $(2X-\frac{d}{dX})^n\cdot 1$, and Laguerre polynomials $\frac{1}{n!}(\frac{d}{dX}-1)^nX^n$. It appears in the work of Diaconis and Fulman~\cite{DF1} in a slightly different form.

Let
\[A_n=\sum_{\pi\in S_n} X^{\des(\pi)},\]
so 
\[A_0=1,\quad A_1=1,\quad A_2=1+X,\quad A_3=1+4X+X^2,\quad \ldots,\]
and 
\begin{equation}\label{Eulerian diff}
  \left(1+X\frac{d}{dX}\right)^n\frac{1}{1-X}=\frac{A_n}{(1-X)^{n+1}}.
\end{equation}

\begin{theorem}[Diaconis--Fulman]\label{Diaconis--Fulman formula} For $1\leq j \leq n$,
  \begin{equation}\label{OP type formula}
    \sum_{i=0}^{n-1} \phi_i(C_j) X^i= (1-X)^{n+1}\left(1+X\frac{d}{dX}\right)^j\frac{1}{1-X}.
    \end{equation}
\end{theorem}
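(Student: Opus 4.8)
The plan is to evaluate the right-hand side of \eqref{OP type formula} by power-series manipulation and to match coefficients of $X^i$ against the closed form \eqref{Sn expression}. The crucial observation is that the operator $D=1+X\frac{d}{dX}$ acts diagonally on monomials: since $DX^m=X^m+X\cdot mX^{m-1}=(m+1)X^m$, one has $D^jX^m=(m+1)^jX^m$. Applying this termwise to the geometric series $\frac{1}{1-X}=\sum_{m\geq 0}X^m$ gives $D^j\frac{1}{1-X}=\sum_{m\geq 0}(m+1)^jX^m$, so the right-hand side of \eqref{OP type formula} equals $(1-X)^{n+1}\sum_{m\geq 0}(m+1)^jX^m$.

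Next I would extract the coefficient of $X^i$. Writing $(1-X)^{n+1}=\sum_{s}(-1)^s\binom{n+1}{s}X^s$ and collecting the terms with $s+m=i$, the coefficient of $X^i$ becomes $\sum_{s=0}^{i}(-1)^s\binom{n+1}{s}(i-s+1)^j$. Reindexing by $m=i-s$ rewrites this as $\sum_{m=0}^{i}(-1)^{i-m}\binom{n+1}{i-m}(m+1)^j$, which is precisely the expression for $\phi_i(C_j)$ furnished by \eqref{Sn expression}, once one notes that $\ell(C_j)=j$ and that the extra summands with $i<m\leq n-1$ appearing in \eqref{Sn expression} all vanish because $\binom{n+1}{i-m}=0$ for $i-m<0$. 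This establishes the equality of the two sides in each degree $0\leq i\leq n-1$.

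To finish, I would confirm that the right-hand side carries no monomials of degree $\geq n$, so that it agrees with the polynomial $\sum_{i=0}^{n-1}\phi_i(C_j)X^i$ exactly. This is immediate from \eqref{Eulerian diff}, which, with $n$ replaced by $j$, identifies $D^j\frac{1}{1-X}$ with $A_j/(1-X)^{j+1}$; hence the right-hand side collapses to $(1-X)^{n-j}A_j$, a polynomial of degree $(j-1)+(n-j)=n-1$, whose higher coefficients therefore vanish.

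I do not anticipate a genuine obstacle: the argument is a direct computation once the eigenvalue action of $D$ on monomials is in hand. The only point demanding care is the bookkeeping of summation ranges, specifically justifying that the finite sum in \eqref{Sn expression} and the coefficient extracted from the a priori infinite series agree, which rests entirely on the vanishing of the binomial coefficients outside the range $0\leq i-m\leq n+1$.
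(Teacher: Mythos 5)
Your proof is correct, but it takes a genuinely different route from the paper. You verify the identity directly: using that $1+X\frac{d}{dX}$ acts on $X^m$ with eigenvalue $m+1$, you expand the right-hand side as $(1-X)^{n+1}\sum_{m\geq 0}(m+1)^jX^m$, read off the coefficient of $X^i$ as $\sum_{m=0}^{i}(-1)^{i-m}\binom{n+1}{i-m}(m+1)^j$, and identify it with $\phi_i(C_j)$ via the closed form \eqref{Sn expression} (correctly noting that the extra summands there vanish because $\binom{n+1}{i-m}=0$ for $m>i$); the degree bound needed to rule out terms of degree $\geq n$ follows from \eqref{Eulerian diff}, since the right-hand side collapses to $(1-X)^{n-j}A_j$ of degree $n-1$. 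The paper instead argues by induction on $n$: the Pascal-type recursion $\phi^{(n)}_i(C_j^{(n)})=\phi^{(n-1)}_i(C_j^{(n-1)})-\phi^{(n-1)}_{i-1}(C_j^{(n-1)})$ shows that multiplying by the extra factor $(1-X)$ propagates the identity from $S_{n-1}$ to $S_n$ for $1\leq j\leq n-1$, and the new case $j=n$ is exactly the Eulerian-degree statement \eqref{Eulerian degrees} combined with \eqref{Eulerian diff}. Your computation makes transparent that \eqref{OP type formula} is essentially a generating-function repackaging of the closed form \eqref{Sn expression}, and it explains why the operator $1+X\frac{d}{dX}$ appears at all; the paper's induction avoids \eqref{Sn expression} entirely and instead highlights the recursive (Pascal-triangle) structure of the Foulkes character table. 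Both arguments are complete; the only delicate point in yours, the bookkeeping of summation ranges and the vanishing of out-of-range binomial coefficients, is handled correctly.
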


\begin{proof}
  Denoting by $\phi_i^{(n)}$ and $C_j^{(n)}$ the $\phi_i$ and $C_j$ for $S_n$, we have \cite{Foulkes,Kerber,M1}
  \begin{equation}\label{row rec}
    \phi^{(n)}_i(C_j^{(n)})=\phi^{(n-1)}_i(C_j^{(n-1)})-\phi^{(n-1)}_{i-1}(C_j^{(n-1)})
  \end{equation}
  for $0\leq i\leq n-1$ and $1\leq j\leq n-1$, where we take $\phi_{-1}^{(n-1)}=\phi_{n-1}^{(n-1)}=0$. 
  So the $S_{n-1}$ cases of \eqref{OP type formula} imply the first $n-1$ cases of \eqref{OP type formula} for $S_n$, while
  equality holds for $j=n$ by \eqref{Eulerian degrees} and \eqref{Eulerian diff}.
\end{proof}

\subsection{Decomposing products of Foulkes characters}
We now present three solutions to computing $[\phi_i\phi_j,\phi_k]$ for $0\leq i,j,k\leq n-1$. 

\subsubsection{First solution}
Our first solution is
a combinatorial solution in terms of descents, 
and it is a corollary of an earlier theorem involving
Loday's Eulerian idempotents \cite{Loday}. 
Writing 
\[\csum{D_i}=\sum_{{\pi\in S_n}\atop{\des(\pi)=i}}\pi,\]
the Eulerian idempotents $\csum{E_0},\csum{E_1},\ldots,\csum{E_{n-1}}\in \mathbb Q[S_n]$ are defined by
\begin{equation}\label{Eulerian Idempotents}
  \sum_{i=0}^{n-1}\binom{X+n-1-i}{n} \csum{D_i}=\sum_{i=0}^{n-1}\csum{E_{n-1-i}}X^{n-i},
\end{equation}
and the following is a special case of Theorem~9 in \cite{M1}. 
\begin{theorem}\label{transition result}
  $\Phi^{t}$ is the transition matrix from
  \[\csum{D_0},\csum{D_1},\ldots, \csum{D_{n-1}}\]
  to
  \[\csum{E_{n-1}},\csum{E_{n-2}},\ldots, \csum{E_0},\]
  so
  \begin{equation}\label{D into E's}
    \csum{D_i}=\sum_{j=0}^{n-1} \phi_i(C_{n-j}) \csum{E_{n-1-j}}
  \end{equation}
  and 
  \begin{equation}\label{Phi inverse}
    \Phi^{-t}=\left(\text{Coeff.\ of $X^{n-j}$ in }\binom{X+n-1-i}{n}\right)_{0\leq i,j\leq n-1}.
  \end{equation}
\end{theorem}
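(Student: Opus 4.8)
The plan is to reduce the entire statement to a single matrix identity and then verify it using the closed form \eqref{Sn expression}. First I would read off the transition from the $\csum{D_i}$'s to the $\csum{E_k}$'s directly from the defining relation \eqref{Eulerian Idempotents}: extracting the coefficient of $X^{n-j}$ on both sides gives $\csum{E_{n-1-j}}=\sum_{k=0}^{n-1}B_{jk}\csum{D_k}$, where $B_{jk}$ is the coefficient of $X^{n-j}$ in $\binom{X+n-1-k}{n}$. (Both sides of \eqref{Eulerian Idempotents} have vanishing constant term, since $\binom{n-1-i}{n}=0$ for $0\le i\le n-1$, so this coefficient extraction captures all of \eqref{Eulerian Idempotents}.) Writing $B=(B_{jk})$, the claim \eqref{D into E's} is exactly $\mathbf D=\Phi\,\mathbf E'$, where $\mathbf D=(\csum{D_0},\dots,\csum{D_{n-1}})^{t}$ and $\mathbf E'=(\csum{E_{n-1}},\dots,\csum{E_0})^{t}$, while \eqref{Phi inverse} is exactly $\Phi^{-t}=B^{t}$. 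Since $\mathbf E'=B\mathbf D$ by construction, it therefore suffices to prove the single identity $\Phi B=I$: this yields $\Phi\mathbf E'=\Phi B\mathbf D=\mathbf D$, which is \eqref{D into E's}, and $B=\Phi^{-1}$, which is \eqref{Phi inverse}.

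To prove $\Phi B=I$ I would use the factorization $\Phi=LV$ already recorded above, with $L=\big((-1)^{i-m}\binom{n+1}{i-m}\big)$ and $V=\big((m+1)^{n-j}\big)$. Then
\[(\Phi B)_{ik}=\sum_{m=0}^{n-1}L_{im}\sum_{j=0}^{n-1}(m+1)^{n-j}B_{jk}.\]
The inner sum is nothing but the polynomial $\binom{X+n-1-k}{n}=\sum_{j}B_{jk}X^{n-j}$ evaluated at $X=m+1$, so it equals $\binom{m+n-k}{n}$. Hence $(\Phi B)_{ik}=\sum_{m}L_{im}\binom{m+n-k}{n}$, and the whole proof comes down to showing $\sum_{m}(-1)^{i-m}\binom{n+1}{i-m}\binom{m+n-k}{n}=\delta_{ik}$.

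This last identity is where the real content sits, and I expect it to be the main obstacle, though it is conceptual rather than computational. The cleanest way to see it is to note that $L$ is the lower-triangular Toeplitz matrix with symbol $(1-t)^{n+1}=\sum_{d}(-1)^{d}\binom{n+1}{d}t^{d}$, whose inverse is the lower-triangular Toeplitz matrix with symbol $(1-t)^{-(n+1)}=\sum_{d}\binom{n+d}{d}t^{d}$; that is, $(L^{-1})_{mk}=\binom{n+m-k}{n}$, which is exactly the factor appearing above. Thus $(\Phi B)_{ik}=(LL^{-1})_{ik}=\delta_{ik}$, and the required identity is precisely the Chu--Vandermonde statement $(1-t)^{n+1}(1-t)^{-(n+1)}=1$. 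The only points needing care are the boundary cases $m<k$, where $\binom{m+n-k}{n}=0$ matches the vanishing of $L^{-1}$ above its diagonal, and the matching of the transpose and ordering conventions implicit in the phrase ``transition matrix''; once $\Phi B=I$ is in hand, both \eqref{D into E's} and \eqref{Phi inverse} follow at once, recovering this special case of Theorem~9 of \cite{M1} without invoking it.
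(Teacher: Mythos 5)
Your proof is correct, but it is genuinely different from what the paper does: the paper offers no argument at all for this theorem, simply quoting it as a special case of Theorem~9 of \cite{M1} (where it is derived from the homological construction of the $\phi_i$'s). Your route is a direct, self-contained verification. The reduction to the single identity $\Phi B=I$ is clean and the bookkeeping checks out: the observation that both sides of \eqref{Eulerian Idempotents} have vanishing constant term (because $\binom{n-1-i}{n}=0$) justifies reading the definition of the $\csum{E}$'s as $\mathbf E'=B\mathbf D$; the factorization $\Phi=LV$ is immediate from \eqref{Sn expression} with $\ell(C_{n-j})=n-j$; the inner sum $\sum_j (m+1)^{n-j}B_{jk}=\binom{m+n-k}{n}$ is just polynomial evaluation; and the final identity
\[
\sum_{m}(-1)^{i-m}\binom{n+1}{i-m}\binom{m+n-k}{n}=\delta_{ik}
\]
is correctly recognized as the Toeplitz inversion $(1-t)^{n+1}\cdot(1-t)^{-(n+1)}=1$, with the boundary vanishing $\binom{m+n-k}{n}=0$ for $m<k$ handled explicitly. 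It is worth noting that this is precisely the identity \eqref{inverse matrices} that the paper itself later imports from \cite{M2} for the $G(r,1,n)$ analogues (Propositions~\ref{phi hook prop} and~\ref{phi epsilon}), so your argument is consistent with the paper's toolkit while remaining independent of \cite{M1}. What your approach buys is an elementary proof from the closed form \eqref{Sn expression} alone; what the citation to \cite{M1} buys is the uniform statement for all the complex reflection groups treated there, of which this is only the type~A case.
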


As a consequence of Theorem~\ref{transition result}, we have the following.

\begin{theorem}\label{Sn combinatorial solution} For any fixed $z\in S_n$ with $\des(z)=k$, 
  \begin{equation}\label{Sn combinatorial solution Eq}
    [\phi_i\phi_j,\phi_k]=|\{(x,y)\in S_n^2 \mid \des(x)=i,\ \des(y)=j,\ xy=z\}|.
  \end{equation}
\end{theorem}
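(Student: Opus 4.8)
The plan is to reinterpret both sides of \eqref{Sn combinatorial solution Eq} as structure constants of two commutative algebras and then identify those algebras. First, observe that the right-hand side is exactly the coefficient of $z$ in the group-algebra product
\[
  \csum{D_i}\,\csum{D_j}=\sum_{x\in D_i}\sum_{y\in D_j}xy\in\mathbb C[S_n],
\]
so the claim is that this coefficient, a priori depending on $z$, in fact depends only on $k=\des(z)$ and equals $[\phi_i\phi_j,\phi_k]$. On the other side, since the $\phi_k$ form an orthonormal basis of $\CF_\ell(S_n)$, the scalar $[\phi_i\phi_j,\phi_k]$ is precisely the coefficient of $\phi_k$ when the pointwise product $\phi_i\phi_j\in\CF_\ell(S_n)$ is expanded in the $\phi$-basis. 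Thus the theorem amounts to saying that $\phi_i\mapsto\csum{D_i}$ carries the pointwise-product structure constants of $\CF_\ell(S_n)$ to the convolution structure constants of the $\csum{D}$'s.

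To prove this I would exhibit an algebra isomorphism. Equip $\CF_\ell(S_n)$ with pointwise multiplication; its primitive idempotents are the class indicators $\mathbf 1_{C_m}$ (satisfying $\mathbf 1_{C_m}\mathbf 1_{C_{m'}}=\delta_{mm'}\mathbf 1_{C_m}$ and summing to the constant function $1$), and $\phi_i=\sum_{m}\phi_i(C_{n-m})\mathbf 1_{C_{n-m}}$. On the group-algebra side, let $\mathcal A$ be the span of $\csum{D_0},\ldots,\csum{D_{n-1}}$. Here I use the key input that Loday's Eulerian idempotents $\csum{E_0},\ldots,\csum{E_{n-1}}$ form a complete system of orthogonal idempotents, $\csum{E_a}\csum{E_b}=\delta_{ab}\csum{E_a}$ and $\sum_a\csum{E_a}=1$; since \eqref{Eulerian Idempotents} expresses the $\csum{E}$'s triangularly (hence invertibly) in terms of the $\csum{D}$'s, this shows that $\mathcal A$ is a commutative subalgebra of $\mathbb C[S_n]$ whose primitive idempotents are exactly the $\csum{E_a}$. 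Now define the linear map $T\colon\CF_\ell(S_n)\to\mathcal A$ by $T(\mathbf 1_{C_{n-m}})=\csum{E_{n-1-m}}$. Because it matches one complete orthogonal idempotent system to another, $T$ is an isomorphism of algebras, and by \eqref{D into E's} it satisfies $T(\phi_i)=\csum{D_i}$.

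With $T$ in hand the conclusion is immediate. Applying $T$ to $\phi_i\phi_j=\sum_k[\phi_i\phi_j,\phi_k]\phi_k$ gives
\[
  \csum{D_i}\,\csum{D_j}=\sum_{k}[\phi_i\phi_j,\phi_k]\,\csum{D_k}.
\]
Since the coefficient of a permutation $z$ in $\csum{D_k}$ is $1$ if $\des(z)=k$ and $0$ otherwise, reading off the coefficient of $z$ on both sides at once shows that this coefficient depends only on $\des(z)=k$ and equals $[\phi_i\phi_j,\phi_k]$, which is the desired identity.

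I expect the main obstacle to be the algebra-isomorphism step, and specifically the verification that $\mathcal A$ really is a subalgebra with the Eulerian idempotents as its primitive idempotents; everything else is bookkeeping with \eqref{D into E's} and the orthonormality of the $\phi_k$. This is precisely where Loday's cyclic-homology idempotents (and hence Theorem~\ref{transition result}) do the real work, so the delicate part is importing their orthogonality rather than performing any computation on the character side.
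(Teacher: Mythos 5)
Your proposal is correct and is essentially the paper's own argument, just written out in full: the paper likewise invokes that the $\csum{D_i}$'s span a subalgebra, that the $\csum{E_i}$'s are orthogonal idempotents, and uses \eqref{D into E's} to conclude that $[\phi_i\phi_j,\phi_k]$ is the coefficient of $\csum{D_k}$ in $\csum{D_i}\csum{D_j}$, then reads off the coefficient of $z$. Your explicit isomorphism $T$ matching the indicator idempotents $\mathbf 1_{C_{n-m}}$ to the Eulerian idempotents $\csum{E_{n-1-m}}$ is exactly the mechanism left implicit in the paper's one-line proof.
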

\begin{proof}
  The $\csum{D_i}$'s form a basis for a subalgebra of $\mathbb C[S_n]$, and the $\csum{E_i}$'s are orthogonal idempotents,
  so by
  \eqref{D into E's}, $[\phi_i\phi_j,\phi_k]$ is the coefficient of
  $\csum{D_k}$ in $\csum{D_i}\csum{D_j}$. Hence \eqref{Sn combinatorial solution Eq}.
\end{proof}

\subsubsection{Second solution}
Our second solution is a closed-form solution which uses
the decomposition in \eqref{decomposition Sn},  
the explicit expression for $\phi_i(\pi)$ in \eqref{Sn expression},
and the fact that, for any $\chi_\lambda\in\Irr(S_n)$, 
\begin{equation}\label{hook content formula}
\langle X^{\ld},\chi_\lambda\rangle=\prod_{b \in \lambda}\frac{X+c(b)}{h(b)},
\end{equation}
where for a box $b\in\lambda$ located in the $i$-th row and $j$-th column,
\[c(b)=j-i,\quad h(b)=\lambda_i-j+1+|\{k>i \mid \lambda_k\geq j\}|.\]

\begin{theorem}
  \begin{equation}
    [\phi_i\phi_j,\phi_k]
    =
    \sum_{{0\leq u\leq i}\atop{0\leq v\leq j}}(-1)^{i-u}(-1)^{j-v}\binom{n+1}{i-u}\binom{n+1}{j-v}\binom{uv+u+v+n-k}{n}. 
  \end{equation}
\end{theorem}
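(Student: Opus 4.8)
The plan is to reduce the computation of $[\phi_i\phi_j,\phi_k]$ to a single ordinary inner product $\langle\,\cdot\,,\cdot\,\rangle$ of a type handled by the hook-content formula \eqref{hook content formula}, and then to evaluate that product on the relevant hook shape. Since the $\phi_m$ form an orthonormal basis for $\emptysq$ and $\phi_i\phi_j\in\CF_\ell(S_n)$, expanding $\phi_i\phi_j$ in the $\phi$-basis and pairing against $\phi_k$ shows that $[\phi_i\phi_j,\phi_k]$ is precisely the coefficient of $\phi_k$ in that expansion. By \eqref{decomposition Sn} this coefficient equals $\langle\phi_i\phi_j,\epsilon_k\rangle/\epsilon_k(1)$; one reaches the same identity directly from Proposition~\ref{general [,] prop} using $\langle\phi_k,\epsilon_m\rangle=\epsilon_m(1)\delta_{km}$ (itself a consequence of the uniqueness in \eqref{decomposition Sn}). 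So it suffices to compute $\langle\phi_i\phi_j,\epsilon_k\rangle$.

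Next I would substitute the closed form \eqref{Sn expression} into both factors. Writing $\phi_i=\sum_{u}(-1)^{i-u}\binom{n+1}{i-u}(u+1)^{\ld}$ and similarly for $\phi_j$, the pointwise product expands as a double sum whose typical term is a scalar multiple of $(u+1)^{\ld}(v+1)^{\ld}$. The key observation is that this product of characters is again one of the basis characters: because $\ld$ is additive on the tensor factors, $(u+1)^{\ld}(v+1)^{\ld}=m^{\ld}$ with $m=(u+1)(v+1)$. Thus the whole computation collapses to evaluating $\langle m^{\ld},\epsilon_k\rangle$ for $m=(u+1)(v+1)$, which \eqref{hook content formula} delivers as a polynomial in $m$.

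The heart of the argument, and the step I expect to be most delicate, is the evaluation of \eqref{hook content formula} on the hook shape $\lambda=(n-k,1^k)$. Here one reads off the contents---$0$ in the corner, $1,\ldots,n-k-1$ along the arm, and $-1,\ldots,-k$ down the leg---and the hook lengths---$n$ in the corner, $1,\ldots,n-k-1$ along the arm, and $1,\ldots,k$ down the leg. The numerator $\prod_{b}(m+c(b))$ is then a product of the $n$ consecutive integers $m-k,m-k+1,\ldots,m+n-k-1$, which telescopes to $n!\binom{m+n-k-1}{n}$, while the denominator $\prod_{b}h(b)=n\,(n-k-1)!\,k!$. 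After simplification the factorial factors combine with $n$ to give $\binom{n-1}{k}=\epsilon_k(1)$, so that
\[
\frac{\langle m^{\ld},\epsilon_k\rangle}{\epsilon_k(1)}=\binom{m+n-k-1}{n}.
\]

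Finally I would assemble the pieces. Substituting $m=(u+1)(v+1)$ gives $m+n-k-1=uv+u+v+n-k$, and restoring the binomial coefficients and signs from the expansions of $\phi_i$ and $\phi_j$ yields
\[
[\phi_i\phi_j,\phi_k]=\sum_{\substack{0\leq u\leq i\\ 0\leq v\leq j}}(-1)^{i-u}(-1)^{j-v}\binom{n+1}{i-u}\binom{n+1}{j-v}\binom{uv+u+v+n-k}{n},
\]
the stated range arising because $\binom{n+1}{i-u}$ vanishes for $u>i$ and $\binom{n+1}{j-v}$ for $v>j$. The only genuine work is the telescoping hook-content evaluation on the hook shape; everything else is bookkeeping of signs and binomial coefficients.
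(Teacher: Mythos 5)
Your proposal is correct and follows essentially the same route as the paper: reduce $[\phi_i\phi_j,\phi_k]$ to $\langle\phi_i\phi_j,\epsilon_k/\epsilon_k(1)\rangle$ via \eqref{decomposition Sn}, expand both factors by \eqref{Sn expression} using $(u+1)^{\ld}(v+1)^{\ld}=((u+1)(v+1))^{\ld}$, and evaluate with the hook-content formula \eqref{hook content formula}. The only difference is that you spell out the telescoping content/hook-length computation on $(n-k,1^k)$, which the paper leaves implicit; your evaluation $\langle m^{\ld},\epsilon_k\rangle/\epsilon_k(1)=\binom{m+n-k-1}{n}$ matches the paper's.
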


\begin{proof}
  By \eqref{decomposition Sn} and \eqref{Sn expression},
  \begin{align*}
    [\phi_i\phi_j,\phi_k]
    &=
    \left\langle \phi_i\phi_j,\frac{\epsilon_k}{\epsilon_k(1)}\right\rangle\\
    &= \sum_{u,v=0}^{n-1} (-1)^{i-u}(-1)^{j-v}\binom{n+1}{i-u}\binom{n+1}{j-v}\left\langle\left((u+1)(v+1)\right)^{\ld},\frac{\epsilon_k}{\epsilon_k(1)}\right\rangle,
  \end{align*}
  and by \eqref{hook content formula},
  \[\left\langle\left((u+1)(v+1)\right)^{\ld},\frac{\epsilon_k}{\epsilon_k(1)}\right\rangle
    =\binom{(u+1)(v+1)+n-1-k}{n}.
  \]
\end{proof}

\subsubsection{Third solution} The third solution is a recursive solution due to P.~Delsarte \cite{Delsarte}.
For Foulkes characters $\phi_i,\phi_j,\phi_k$ of
$S_n$, let us write
\[\phi_i\phi_j=\sum_{k=0}^{n-1} c_{ijk}^{(n)} \phi_k.\]
Delsarte defines recursively certain values $F(i,k,n)$, $0\leq i,k\leq n$, that depend on a parameter $q$ and initial
conditions $F(0,k,m)$ with $0\leq k\leq m$, and he considers the matrix $P_{n-1}=(F(i,k,n-1))_{0\leq i, k\leq n-1}$.
Although Delsarte did not specialize in this way, taking
$q=1$ and $F(0,k,m)$ to be the Eulerian number $|\{\pi\in S_{m+1}\mid \des(\pi)=k\}|$, and then comparing Delsarte's definition with
\eqref{Eulerian degrees} and \eqref{row rec}, we find that the transpose of Delsarte's matrix $P_{n-1}$ becomes the 
Foulkes character table $\Phi$ of $S_n$, so that
\begin{equation}\label{Foulkes Delsarte}
  \phi_i(C_{n-j})=F(j,i,n-1),\quad 0\leq i,j\leq n-1.
\end{equation}
In addition to finding very general expressions for the $F(i,k,n)$ and the determinant of $P_{n-1}$ in Theorems 2 and 3 of \cite{Delsarte}, Delsarte
 also
 found a recursive solution for calculating the $c_{ijk}^{(n)}$'s, since
 $c_{ij0}^{(n)}=[\phi_i,\phi_j]=\delta_{ij}$.
\begin{theorem}[Delsarte]
\[c_{i+1,j+1,k+1}^{(n+1)}-c_{i+1,j+1,k}^{(n+1)}=-c_{i,j,k}^{(n)}+c_{i+1,j,k}^{(n)}+c_{i,j+1,k}^{(n)}-c_{i+1,j+1,k}^{(n)}.\]
\end{theorem}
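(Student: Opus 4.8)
The plan is to bypass the combinatorial and closed-form machinery and derive the recursion directly from the recursion \eqref{row rec}, which ties the Foulkes values of $S_{n+1}$ to those of $S_n$. Write $f_n(i,m)=\phi_i^{(n)}(C_m^{(n)})$, with the convention $\phi_i^{(n)}=0$ for $i<0$ or $i>n-1$ (so $f_n(-1,m)=f_n(n,m)=0$). Because $\phi_i^{(n)}\phi_j^{(n)}$ again lies in $\CF_\ell(S_n)$, the definition $\phi_i^{(n)}\phi_j^{(n)}=\sum_k c_{ijk}^{(n)}\phi_k^{(n)}$, evaluated on $C_m^{(n)}$, yields the family of scalar identities
\[
  f_n(i,m)\,f_n(j,m)=\sum_{k=0}^{n-1}c_{ijk}^{(n)}\,f_n(k,m),
\]
valid for every $m$. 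This relation together with \eqref{row rec} are the only inputs I would use.

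Next I would fix a column $m$ in the overlap range $1\le m\le n$, where \eqref{row rec} for $S_{n+1}$ applies, and expand the product $f_{n+1}(i+1,m)\,f_{n+1}(j+1,m)$ in two ways. First, \eqref{row rec} gives $f_{n+1}(i+1,m)=f_n(i+1,m)-f_n(i,m)$ and similarly in $j$; multiplying these two differences and applying the $S_n$ identity above to each of the four resulting products rewrites the left side as
\[
  \sum_{k=0}^{n-1}\bigl(c_{i+1,j+1,k}^{(n)}-c_{i+1,j,k}^{(n)}-c_{i,j+1,k}^{(n)}+c_{i,j,k}^{(n)}\bigr)f_n(k,m).
\]
Second, the $S_{n+1}$ identity gives $\sum_k c_{i+1,j+1,k}^{(n+1)}f_{n+1}(k,m)$; substituting $f_{n+1}(k,m)=f_n(k,m)-f_n(k-1,m)$ and shifting $k\mapsto k+1$ in the second sum (the boundary contributions vanish since $f_n(-1,m)=f_n(n,m)=0$) rewrites it as
\[
  \sum_{k=0}^{n-1}\bigl(c_{i+1,j+1,k}^{(n+1)}-c_{i+1,j+1,k+1}^{(n+1)}\bigr)f_n(k,m).
\]

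The last step is to match coefficients. This is legitimate because the $n$ functions $m\mapsto f_n(k,m)$, $0\le k\le n-1$, are linearly independent over the index set $\{1,\dots,n\}$: the matrix $\bigl(f_n(k,m)\bigr)$ is the Foulkes table $\Phi$ of $S_n$ up to a relabeling of columns, and $\Phi$ is invertible since the $\phi_k^{(n)}$ form a basis of $\CF_\ell(S_n)$. Equating the coefficient of each $f_n(k,m)$ gives
\[
  c_{i+1,j+1,k}^{(n)}-c_{i+1,j,k}^{(n)}-c_{i,j+1,k}^{(n)}+c_{i,j,k}^{(n)}=c_{i+1,j+1,k}^{(n+1)}-c_{i+1,j+1,k+1}^{(n+1)},
\]
and negating both sides and reordering the terms produces exactly the stated recursion.

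The hard part is not any single computation but the bookkeeping: applying \eqref{row rec} correctly on both sides, carrying out the index shift $k\mapsto k+1$ together with the vanishing boundary terms, and—most importantly—checking that the overlap range $1\le m\le n$ supplies precisely $n$ columns, so that the restricted rows of $\Phi$ remain independent and coefficient comparison is valid. The alternative route through the closed-form second solution would also work but leads to a more tedious binomial manipulation, since the degree factors $\binom{n+1}{\cdot}$ and $\binom{n+2}{\cdot}$ differ between $S_n$ and $S_{n+1}$.
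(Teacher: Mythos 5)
Your proof is correct. The bookkeeping all checks out: the overlap range $1\le m\le n$ does supply all $n$ columns of the $S_n$ Foulkes table (an element of $S_n$ has between $1$ and $n$ cycles), so the coefficient comparison reduces to the invertibility of $\Phi^{(n)}$, which holds because the $\phi_k^{(n)}$ are a basis of the $n$-dimensional space $\CF_\ell(S_n)$; the boundary terms $f_n(-1,m)=f_n(n,m)=0$ are consistent with the conventions attached to \eqref{row rec}; and the final sign flip reproduces the stated identity exactly. The paper itself does not prove this theorem: it imports it from Delsarte's 1976 paper by observing, via \eqref{Foulkes Delsarte}, that at $q=1$ with Eulerian initial data the transpose of Delsarte's matrix $P_{n-1}$ is the Foulkes table $\Phi$, so that Delsarte's linearization recurrence becomes the displayed relation among the $c_{ijk}^{(n)}$. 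Your argument is in essence the same computation Delsarte carries out in his more general $q$-setting --- expand the product two ways using the two-term column recurrence and compare coefficients against an invertible array --- but executed entirely inside the paper's framework using only \eqref{row rec} and the basis property of the $\phi_k$. What this buys is a self-contained proof that makes no appeal to Delsarte's formalism or the dictionary \eqref{Foulkes Delsarte}; what it gives up is the observation, which is part of the point of the paper's "third solution," that the recursion was already present in Delsarte's work four years before Foulkes characters were introduced.
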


\subsection{}
Before moving on, we give 
another useful consequence of Theorem~\ref{transition result}. Let 
\begin{equation}
  \phi=\phi_0+X\phi_1+X^2\phi_2+\ldots +X^{n-1}\phi_{n-1}.
\end{equation}

\begin{theorem}\label{General inversion}
  For any two sequences $a_1,a_2,\ldots,a_n$ and $b_1,b_2,\ldots,b_n$, 
  \[\sum_{i=1}^n a_i X^i =\sum_{k=1}^n b_k \binom{X+n-k}{n}\]
  if and only if
  \[\sum_{i=1}^n a_i\phi(C_i)  =\sum_{k=1}^n b_k X^{k-1}.\]
\end{theorem}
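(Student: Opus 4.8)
The plan is to recognize this as a statement about two bases for the space of polynomials of degree at most $n-1$ (or sequences of length $n$), and to reduce the biconditional to a single matrix identity coming from Theorem~\ref{transition result}. The key observation is that $\phi(C_i)$ packages the $i$-th row of the character table $\Phi$, since $\phi(C_i)=\sum_{k=0}^{n-1}\phi_k(C_i)X^k$. So the right-hand expression $\sum_{i=1}^n a_i\phi(C_i)$ is a polynomial in $X$ whose coefficient vector is $\Phi^t$ applied to the vector $(a_i)$. Meanwhile, formula~\eqref{Phi inverse} of Theorem~\ref{transition result} identifies $\Phi^{-t}$ as the matrix whose $(i,j)$ entry is the coefficient of $X^{n-j}$ in $\binom{X+n-1-i}{n}$, which is precisely the matrix that expands the binomial basis $\binom{X+n-k}{n}$ into the monomial basis $X^i$.

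First I would set up the two coordinate systems explicitly. Reindex so that everything runs over $1\leq i,k\leq n$ and match the shift conventions in~\eqref{Phi inverse}: the claim $\sum_{i=1}^n a_iX^i=\sum_{k=1}^n b_k\binom{X+n-k}{n}$ says that $(a_i)$ and $(b_k)$ are the coordinate vectors of one and the same polynomial with respect to the monomial basis and the binomial basis, so they are related by exactly the change-of-basis matrix recorded in~\eqref{Phi inverse}, namely $(a_i)=\Phi^{-t}(b_k)$ up to the bookkeeping of indices. Second I would unpack the other equation: $\sum_{i=1}^n a_i\phi(C_i)=\sum_{k=1}^n b_kX^{k-1}$ reads, coefficient by coefficient in $X$, as $\sum_i a_i\phi_{k-1}(C_i)=b_k$, i.e.\ $\Phi^t(a_i)=(b_k)$ in the same index conventions. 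These two relations, $(a_i)=\Phi^{-t}(b_k)$ and $\Phi^t(a_i)=(b_k)$, are literally inverse to each other, so each holds if and only if the other does, which is the desired equivalence.

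The main obstacle, and essentially the only real work, is getting the index shifts to line up cleanly: the binomial coefficients in~\eqref{Phi inverse} carry $n-1-i$ while the theorem's statement carries $n-k$, and the polynomial $\phi(C_i)$ has degree $n-1$ with coefficients $\phi_k(C_i)$ for $0\leq k\leq n-1$ whereas the $a_i,b_k$ are indexed from $1$ to $n$. So I would be careful to state which entry of $\Phi$ is $\phi_i(C_{n-j})$ versus $\phi_i(C_j)$, and verify that the substitution $C_{n-j}\leftrightarrow C_j$ used throughout the section is applied consistently. Once the dictionary between the two index regimes is fixed, the proof is immediate: both halves of the biconditional assert that $(a_i)$ and $(b_k)$ are related by $\Phi^t$ (respectively its inverse), and Theorem~\ref{transition result} supplies exactly the matrix entries needed to see that the monomial-to-binomial change of basis is $\Phi^{-t}$. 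I expect no analytic or combinatorial difficulty beyond this reindexing, since the content is entirely linear-algebraic once Theorem~\ref{transition result} is invoked.
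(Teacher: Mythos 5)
Your proposal is correct and follows essentially the same route as the paper: both arguments read \eqref{Phi inverse} as the change-of-basis matrix $\Phi^{-t}$ from the binomial basis $\binom{X+n-k}{n}$ to the monomial basis, note that $\phi(C_i)$ packages a column of $\Phi^t$, and observe that the two displayed identities assert inverse linear relations between $(a_i)$ and $(b_k)$. The paper just organizes the same bookkeeping as a chain of equivalences $a^tx=b^ty\Leftrightarrow a^t\Phi^tz=b^tz$ with explicitly chosen vectors $x,y,z$.
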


\begin{proof}
  Let
  \[a=\begin{pmatrix} a_n \\ a_{n-1}\\ \vdots \\ a_1\end{pmatrix},\quad
    b=\begin{pmatrix} b_1 \\ b_2\\ \vdots \\ b_n\end{pmatrix},\quad
    x=\begin{pmatrix} X^n \\ X^{n-1}\\ \vdots \\ X^1\end{pmatrix},\quad
    y=\begin{pmatrix} \binom{X+n-1}{n} \\ \binom{X+n-2}{n}\\ \vdots \\\binom{X}{n} \end{pmatrix}.\]
  Then \eqref{Phi inverse} can be rewritten as
  \[\Phi^{-t}x=y.\]
  So  
  \[a^tx=b^t y\quad \Leftrightarrow\quad a^tx=b^t \Phi^{-t}x\quad \Leftrightarrow\quad a^t \Phi^tx=b^t x\quad \Leftrightarrow\quad a^t\Phi^{t}z=b^tz,\]
  where $z=(1,X,\ldots, X^{n-1})^t$ is obtained from $x$ by replacing $X$ by $X^{-1}$ and then multiplying by $X^{n}$.
\end{proof}

\subsection{Zagier's result}
As an application of Foulkes characters, particularly
the formula of Diaconis and Fulman in Theorem~\ref{Diaconis--Fulman formula}
and our inversion result in Theorem~\ref{General inversion}, we
give a new short proof of a well-known result of Zagier~\cite{Zagier}.

\begin{theorem}[Main theorem of Zagier]
  For any conjugacy class ${K\in \Cl(S_n)}$ and any $n$-cycle $\sigma\in S_n$, let
\[p_i(K)=\frac{|\{\tau\in K\mid \tau\sigma \text{ \rm has $i$ cycles}\}|}{|K|}.\]
  The numbers $p_i(K)$ are determined by
  \begin{equation}\sum_{i=1}^n p_i(K)P_i(X)=\frac{\wp(K,X)}{(1-X)^{n+2}},\end{equation}
  where $\wp(K,X)=\det(1-\tau X,\mathbb C^n)$ is the characteristic polynomial of an element $\tau\in K$
  under the permutation representation $\tau\mapsto (\delta_{i\tau(j)})_{i,j}$ and
  \[P_1(X)=\frac{1}{(1-X)^2},\quad P_2(X)=\frac{1+X}{(1-X)^3},\quad P_3(X)=\frac{1+4X+X^2}{(1-X)^4},\quad\ldots\]
  are the polynomials in $\frac{1}{1-X}$ defined by $P_i(X)=\sum_{k=1}^\infty k^i X^{k-1}\in\mathbb Z[[X]]$.
\end{theorem}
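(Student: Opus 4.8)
The plan is to reduce the claimed generating-function identity to a single character computation governed by the hook characters $\epsilon_m$, using the Diaconis--Fulman formula \eqref{OP type formula} to move between the two sides. First I would note that $(1+X\frac{d}{dX})X^{k-1}=kX^{k-1}$, so $P_i(X)=(1+X\frac{d}{dX})^i\frac{1}{1-X}$, and hence by Theorem~\ref{Diaconis--Fulman formula}, for $1\leq i\leq n$,
\[P_i(X)=\frac{1}{(1-X)^{n+1}}\sum_{m=0}^{n-1}\phi_m(C_i)X^m=\frac{\phi(C_i)}{(1-X)^{n+1}}.\]
Substituting this into the left-hand side of Zagier's identity gives $\sum_{i=1}^n p_i(K)P_i(X)=\frac{1}{(1-X)^{n+1}}\sum_{i=1}^n p_i(K)\phi(C_i)$, so it suffices to prove that $\sum_{i=1}^n p_i(K)\phi(C_i)=\wp(K,X)/(1-X)$.

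Next I would compute $\sum_i p_i(K)\phi(C_i)$ directly. Since $\phi(\cdot)=\sum_m X^m\phi_m(\cdot)$ depends only on $\ell$ and $p_i(K)$ weights $\tau\in K$ according to $\ell(\tau\sigma)=i$, this equals $\frac{1}{|K|}\sum_{\tau\in K}\phi(\tau\sigma)$. The key input is that a class sum acts as a scalar in each irreducible, giving $\frac{1}{|K|}\sum_{\tau\in K}\chi(\tau\sigma)=\chi(K)\chi(\sigma)/\chi(1)$ for $\chi\in\Irr(S_n)$. Because $\sigma$ is an $n$-cycle, $\chi(\sigma)=0$ unless $\chi$ is a hook $\epsilon_m=\chi_{(n-m,1^m)}$, in which case $\epsilon_m(\sigma)=(-1)^m$. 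Expanding $\phi_m=\sum_\chi\langle\phi_m,\chi\rangle\chi$ and using $\langle\phi_m,\epsilon_j\rangle=\epsilon_j(1)\delta_{mj}$, which is forced by the unique decomposition \eqref{decomposition Sn}, only the single term $\chi=\epsilon_m$ survives, and I expect to obtain
\[\frac{1}{|K|}\sum_{\tau\in K}\phi_m(\tau\sigma)=(-1)^m\epsilon_m(K),\qquad\text{so}\qquad \sum_{i=1}^n p_i(K)\phi(C_i)=\sum_{m=0}^{n-1}(-1)^m\epsilon_m(K)X^m.\]

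Finally I would identify $\sum_{m}(-1)^m\epsilon_m(K)X^m$ with $\wp(K,X)/(1-X)$. Since $\epsilon_m$ is the character of $\wedge^m$ of the reflection representation and the permutation module $\mathbb{C}^n$ is the sum of the reflection representation and the trivial one, the exterior powers of $\mathbb{C}^n$ have characters $\epsilon_m+\epsilon_{m-1}$ (with $\epsilon_{-1}=\epsilon_n=0$), so that $\wp(K,X)=\det(1-\tau X)=\sum_{m=0}^n(-1)^m(\epsilon_m(K)+\epsilon_{m-1}(K))X^m=(1-X)\sum_{m=0}^{n-1}(-1)^m\epsilon_m(K)X^m$. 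Combining the three steps yields $\sum_i p_i(K)P_i(X)=\wp(K,X)/(1-X)^{n+2}$, as desired.

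The main obstacle is the middle step: recognizing that everything collapses onto the hook characters precisely because $\sigma$ is an $n$-cycle, and that the Foulkes--hook duality $\langle\phi_m,\epsilon_j\rangle=\epsilon_j(1)\delta_{mj}$ isolates exactly one term. Once this is seen, the Diaconis--Fulman formula supplies the factor $(1-X)^{n+1}$ and the elementary exterior-power identity supplies the remaining $1-X$. An alternative that matches the tools advertised above is to apply the inversion of Theorem~\ref{General inversion} with $a_i=p_i(K)$, transporting the identity into the binomial basis; there one must instead verify $\sum_i p_i(K)X^i=\sum_{k=1}^n(-1)^{k-1}\epsilon_{k-1}(K)\binom{X+n-k}{n}$, and it is the hook content formula \eqref{hook content formula} that produces the binomial coefficients.
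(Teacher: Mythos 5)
Your argument is correct, and it reaches the same two anchor points as the paper --- the collapse onto hook characters because $\sigma$ is an $n$-cycle, and the conversion $P_i(X)=\phi(C_i)/(1-X)^{n+1}$ via Theorem~\ref{Diaconis--Fulman formula} --- but it routes the middle step differently. The paper first computes $\sum_i p_i(K)X^i$ as a trace over the regular representation, lands in the binomial basis $\binom{X+n-k-1}{n}$ by way of the hook content formula \eqref{hook content formula}, and then invokes the inversion result of Theorem~\ref{General inversion} to pass to $\sum_i p_i(K)\phi(C_i)=\sum_k(-1)^k\epsilon_k(K)X^k$. You instead evaluate $\sum_i p_i(K)\phi(C_i)=\frac{1}{|K|}\sum_{\tau\in K}\phi(\tau\sigma)$ directly, using the class-sum scalar $\chi(K)\chi(\sigma)/\chi(1)$ together with the duality $\langle\phi_m,\epsilon_j\rangle=\epsilon_j(1)\delta_{mj}$ forced by \eqref{decomposition Sn}; this bypasses both the hook content formula and Theorem~\ref{General inversion} entirely (your closing remark correctly identifies the paper's route as the ``alternative''). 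What your route buys is economy --- one fewer change of basis and no need for the inversion machinery; what the paper's route buys is a demonstration of Theorem~\ref{General inversion} in action, which is one of the stated purposes of the section. You also supply an explicit justification, via exterior powers of $\mathbb C^n=\text{refl}\oplus\text{triv}$, for the identity $\sum_{k=0}^{n-1}(-1)^k\epsilon_k(K)X^k=\wp(K,X)/(1-X)$, which the paper asserts without proof in \eqref{Foulkes Zagier}; that verification is welcome and correct.
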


\begin{proof}
  Writing $L(X)=\sum_{\pi\in S_n} \pi X^{\ell(\pi)}$, and denoting the regular representation by $\Reg$, we have 
  \begin{align*}
    \sum_{i=1}^n p_i(K)X^i
    &=
    \frac{1}{|S_n|}\frac{1}{|K|}\trace\circ\Reg (K\sigma L(X))\\
    &=\frac{1}{|S_n|}\frac{1}{|K|}\sum_{\chi\in\Irr(S_n)} \chi(1)\frac{\chi(K)|K|}{\chi(1)} \chi(\sigma) \sum_{\pi\in S_n} \frac{\chi(\pi) X^{\ell(\pi)}}{\chi(1)}\\
    &=\sum_{k=0}^{n-1}(-1)^k \epsilon_k(K) \left\langle \frac{\epsilon_k}{\epsilon_k(1)},X^{\ld}\right\rangle\\
    &=\sum_{k=0}^{n-1}(-1)^k \epsilon_k(K) \binom{X+n-k-1}{n}.
  \end{align*} 
Equivalently, by Theorem~\ref{General inversion}, 
\begin{equation}\label{Foulkes Zagier}
\sum_{i=1}^n p_i(K)\phi(C_i)=\sum_{k=0}^{n-1}(-1)^k \epsilon_k(K)X^k=\frac{\wp(K,X)}{1-X}.
\end{equation}
By Theorem~\ref{Diaconis--Fulman formula},  \eqref{Foulkes Zagier} is equivalent to 
\begin{equation}
\sum_{i=1}^n p_i(K)P_i(X)=\frac{\wp(K,X)}{(1-X)^{n+2}}.
\end{equation}
\end{proof}

\subsection{Carries in terms of products}
As another application of our framework for Foulkes characters, we give an interesting reformulation of the
Markov chain studied by Holte \cite{Holte} and Diaconis and Fulman \cite{DF1} in terms of our inner product $\emptysq$ and products of characters in $\CF_\ell(S_n)$. 
Let $M$ be the transition matrix given in \eqref{carries matrix}.  

\begin{theorem}\label{Sn rewrite}
  $M(i,j)=[\phi_i,b^{\ell-n}\phi_j]$.
\end{theorem}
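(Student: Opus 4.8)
The plan is to read the Diaconis--Fulman eigenvector relation $\Phi^{t}M=D\Phi^{t}$ not as a statement about matrices but as an identity of class functions in $\CF_\ell(S_n)$, and then to extract the entries of $M$ using the orthonormality of the Foulkes characters with respect to $\emptysq$.

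First I would unwind the matrix identity entrywise. Reading $\Phi^{t}M=D\Phi^{t}$ at position $(m,j)$ with $0\le m,j\le n-1$ gives
\[
\sum_{i=0}^{n-1} M(i,j)\,\phi_i(C_{n-m}) = b^{-m}\phi_j(C_{n-m}),
\]
since $(\Phi^{t})_{mi}=\phi_i(C_{n-m})$ and $D_{mm}=b^{-m}$. The key observation is that the class $C_{n-m}$ consists of permutations with exactly $n-m$ cycles, so $\ell(C_{n-m})=n-m$ and hence $b^{-m}=b^{\ell(C_{n-m})-n}$. Thus the right-hand side is precisely the value of the class function $b^{\ell-n}\phi_j$ at $C_{n-m}$. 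As $m$ runs over $0,1,\ldots,n-1$, the classes $C_{n-m}$ run over all nonempty length classes $C_1,\ldots,C_n$, so the displayed identity says that the two class functions $\sum_i M(i,j)\phi_i$ and $b^{\ell-n}\phi_j$ agree on every length class. Both lie in $\CF_\ell(S_n)$---the latter because $b^{\ell-n}$ depends only on $\ell$---so they are equal:
\[
b^{\ell-n}\phi_j = \sum_{i=0}^{n-1} M(i,j)\,\phi_i .
\]

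With this expansion in hand the theorem follows immediately. Since $\phi_0,\ldots,\phi_{n-1}$ are orthonormal for $\emptysq$, applying $[\phi_i,\,\cdot\,]$ and using that $M$ is real (being a stochastic matrix) yields
\[
[\phi_i,\,b^{\ell-n}\phi_j]=\sum_{i'=0}^{n-1}\overline{M(i',j)}\,[\phi_i,\phi_{i'}]=M(i,j).
\]
I do not expect a genuine obstacle here; the only real care needed is bookkeeping---tracking the transpose and the reindexing $C_i\leftrightarrow C_{n-m}$ so that the diagonal entry $b^{-m}$ of $D$ lines up with the value of $b^{\ell-n}$ on the correct class, and remembering that the asymmetry of $M$ is already visible in the fact that $\emptysq$ pairs $\phi_i$ against the twisted function $b^{\ell-n}\phi_j$ rather than against $\phi_j$.
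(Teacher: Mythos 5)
Your proposal is correct and rests on the same two pillars as the paper's proof: the Diaconis--Fulman eigenvector relation $\Phi^{t}M=D\Phi^{t}$ and the duality that makes the $\phi_i$ orthonormal for $\emptysq$. The only difference is packaging --- the paper inverts $\Phi^{t}$ explicitly via the matrix $\Lambda$ of normalized hook characters $\epsilon_i/\epsilon_i(1)$ and then identifies the entries of $\Lambda D\Phi^{t}$ with $[\phi_i,b^{\ell-n}\phi_j]$ through Proposition~\ref{general [,] prop}, whereas you read the eigenvector relation as the class-function identity $b^{\ell-n}\phi_j=\sum_i M(i,j)\phi_i$ and extract coefficients using the already-established orthonormality, which amounts to the same computation.
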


\begin{proof}
  Diaconis and Fulman showed that, 
  for $0\leq j\leq n-1$, the row vector $(\phi_0(C_{n-j}),\phi_1(C_{n-j}),\ldots,\phi_{n-1}(C_{n-j}))$ is a left eigenvector of
  the transition matrix $M$  with eigenvalue $b^{-j}$, so
  \begin{equation}\label{Sn eigen}
    \Phi^tM=D\Phi^t,
  \end{equation}
  where $D$ is the diagonal matrix ${\rm diag}(1,b^{-1},b^{-2},\ldots, b^{-n+1})$.

  For $0\leq i,j\leq n-1$, let 
  \[
    \alpha_{ij}=
    \frac{1}{|S_n|}
    \sum_{\pi\in C_{n-j}} \frac{\epsilon_i(\pi)}{\epsilon_i(1)},\qquad
    \epsilon_i=\chi_{(n-i,1^i)},
  \]
  and let $\Lambda$ be the matrix 
  \[
    \Lambda=(\alpha_{ij})_{0\leq i,j\leq n-1}.
  \]
  Then, using \eqref{decomposition Sn},
  \begin{equation}\label{Sn Lambda inverse}
    \Lambda\Phi^t
    =
    \left(\left\langle \frac{\epsilon_i}{\epsilon_i(1)},\phi_j\right\rangle\right)_{0\leq i,j\leq n-1}
    =I.
  \end{equation}
  Hence, by \eqref{Sn eigen} and \eqref{Sn Lambda inverse}, 
  \[
    M=\Lambda D\Phi^t
    =
    \left(\left\langle \frac{\epsilon_i}{\epsilon_i(1)}b^{\ell-n},\phi_j\right\rangle\right)_{0\leq i,j\leq n-1}
    =
  \left([ \phi_i,b^{\ell-n}\phi_j]\right)_{0\leq i,j\leq n-1}.
  \]
\end{proof}

\section{Type B and the other full monomial groups}\label{Sect G(r,1,n)}
We begin by fixing an integer $r>1$,
a primitive $r$-th root of unity $\zeta$,
the cyclic group $Z=\langle \zeta\rangle$,
and a full monomial group
\[G_n=G(r,1,n),\]
so the elements of $G_n$ are the $n$-by-$n$ matrices
$x$ with exactly one nonzero entry in each row and each column,
and with $r$-th roots of unity for the nonzero entries.
Equivalently, the elements 
$x\in G_n$ are the products 
\[x=D.A_{\pi}\]
where $D$ is a diagonal matrix ${\rm diag}(\xi_1,\xi_2,\ldots,\xi_n)$
with $\xi_i\in Z$, and
$A_{\pi}=(\delta_{i\pi(j)})_{1\leq i,j\leq n}$
is the usual matrix of a permutation $\pi\in S_n$.
By the {\it type of~$x$} we shall mean the partition-valued function
\[\lambda: \Cl(Z)\to \mathscr P\]
which takes $\{\zeta^j\}$, $0\leq j\leq r-1$, to
the partition $\lambda^j$ whose parts are the periods
of the cycles $(i_1\, i_2\, \ldots\, i_k)$ of $\pi$
such that $x_{i_1 i_2}x_{i_2 i_3}\ldots x_{i_k i_1}=\zeta^j$,
so two elements of $G_n$ belong to the same conjugacy class if and only
if they have the same type.
We shall denote
by $K_\lambda$ the class of elements of type $\lambda$.
Identifying $\lambda$ with the $r$-tuple of partitions $\lambda^i$,
we shall write
\[\lambda=(\lambda^0,\lambda^1,\ldots,\lambda^{r-1})\in\mathscr P^r\]
and
\[\|\lambda\|=\sum_{i=0}^{r-1} |\lambda^i|=n.\]
In general, for any partition-valued function $f$ on a finite
set $\mathscr S$, we  write
\[\|f\|=\sum_{s\in\mathscr S} |f(s)|.\]

There is also the natural
bijection \cite{Macdonald} between irreducible characters $\chi_\lambda$ of $G_n$ and
partition-valued functions
\[\lambda:\Irr(Z)\to \mathscr P\]
with $\|\lambda\|=n$. 
Denoting by $\varphi_k$ the irreducible character of $Z$ given by
\[\varphi_k(\zeta^s)=\zeta^{ks},\]
and identifying $\lambda$ with the $r$-tuple of values $\lambda(\varphi_i)$,
we shall write
\[\lambda=(\lambda(\varphi_0),\lambda(\varphi_1),\ldots,\lambda(\varphi_{r-1})).\]

With $G_n$ being a reflection group,
there is the natural length function 
\[\mathfrak l(x)=\min \{k\geq 0 \mid x=y_1y_2\ldots y_k\ \text{for some reflections $y_i\in G_n$}\}.\]
For our purposes, we will instead work with another length function $\ell$.
We define, for $x\in G_n$ of type
$\lambda=(\lambda^0,\lambda^1,\ldots,\lambda^{r-1})\in\mathscr P^r$, 
\[\ell(x)=\text{number of parts of $\lambda^0$}.\]
By Proposition 2 of \cite{M2},
\begin{equation}
  \ell(x)=n-\mathfrak l(x)=\dim\ker(x-1),
\end{equation}
so studying $\ell$ is equivalent to studying $\mathfrak l$.
In particular, a function $f$ depends only on $\ell$, in the sense that
$f(x)=f(y)$ whenever $\ell(x)=\ell(y)$, 
if and only if $f$ depends only on $\mathfrak l$. 

The Foulkes characters of $G_n$ were introduced in \cite{M1}, where they
were constructed from certain reduced homology groups
coming from the associated Milnor fiber complex,
which is a certain wedge of spheres that is 
a strong deformation retract
of a Milnor fiber $f_1^{-1}(1)$
coming from the invariant theory of $G_n$. They
are denoted
\[\phi_0,\phi_1,\ldots,\phi_n,\]
and they were shown in \cite{M1} to have some remarkable properties
that are analogous to the type A properties stated in \eqref{length property}--\eqref{Sn expression}.

The $\phi_i$'s form a basis for the space $\CF_\ell(G_n)$ of all
class functions $\vartheta$ that depend only on $\ell$, with
each $\vartheta\in \CF_\ell(G_n)$ decomposing uniquely as 
\begin{equation}\label{decomposition G(r,1,n)}
  \vartheta=\sum_{i=0}^{n} \frac{\langle \vartheta,\epsilon_i\rangle}{\epsilon_i(1)}\phi_i,
\end{equation}
where $\epsilon_i$ is the irreducible character $\chi_{((n-i),(1^i),\emptyset,\emptyset,\ldots,\emptyset)}$,
so $\epsilon_i(1)=\binom{n}{i}$. 
They decompose the character $\rho$ of the regular representation:
\begin{equation}\label{G(r,1,n) reg decomposition}
  \phi_0+\phi_1+\ldots+\phi_{n}=\rho.
\end{equation}
Their degrees are the natural analogues of Eulerian numbers
given by Steingr\'imsson's notion of
descent:
\begin{equation}\label{G(r,1,n) Eulerian degrees}
  \phi_i(1)=|\{x\in G_n \mid \des(x)=i\}|.
\end{equation}
They branch according to 
\begin{equation}\label{G(r,1,n) branching rule}
  \phi_i|_{G_{n-1}}=((n+1)r-(ri+1))\phi_{i-1}+(ri+1)\phi_i.
\end{equation}
And they admit closed-form expressions:
\begin{equation}\label{G(r,1,n) expression}
  \phi_i(x)=\sum_{j=0}^{n} (-1)^{i-j}\binom{n+1}{i-j}(rj+1)^{\ell(x)}.
\end{equation}

We shall denote by $\Phi$ the character table
\[\Phi=(\phi_i(C_{n-j}))_{0\leq i,j\leq n},\]
where, for $0\leq i\leq n$, 
\[C_i=\{x\in G_n \mid \ell(x)=i\},\]
and for any $\vartheta\in\CF_\ell(G_n)$ we denote by
$\vartheta(C_i)$ the value $\vartheta(x)$ for any $x\in C_i$.

\subsection{Fourier transform of $X^{\ld}$}
The Fourier transform of the class function $X^\ld: x\mapsto X^{\ld(x)}$ will play
an important role in what follows.
Given $\lambda\in\mathscr{P}^r$, by $b\in \lambda$ we
shall mean a box $b$ contained in the Young diagram of some $\lambda^j$,
and by $c(b)$ and $h(b)$ we shall mean the usual content and hook-length
associated to the box $b$ in $\lambda^j$.
Given $\lambda\in\mathscr{P}^r$ and $b\in\lambda$, we define
\[
  \delta_0(b)=
  \begin{cases}
    1 & \text{if $b\in\lambda^0$,}\\
    0 & \text{otherwise.}
  \end{cases}
\]

\begin{theorem}\label{r hook content}
  For any $\lambda\in\mathscr{P}^r$ with $\|\lambda\|=n$,
  \begin{equation}\label{r hook chi}
    \langle X^{\ld},\chi_{\lambda}\rangle
    =
    \prod_{b\in\lambda}\frac{\frac{X-1}{r}+c(b)+\delta_0(b)}{h(b)},
  \end{equation}
  \begin{equation}\label{r hook normalized}
    \left\langle X^{\ld},
    \frac{\chi_{\lambda}}{\chi_{\lambda}(1)}\right\rangle
    =
    \frac{1}{n!}
    \prod_{b\in\lambda}\left(\frac{X-1}{r}+c(b)+\delta_0(b)\right),
  \end{equation}
  and, for $\mathfrak X_\lambda:G_n\to\GL_d(\mathbb C)$ affording $\chi_\lambda$,
  \begin{equation}\label{r hook Fourier}
    \sum_{x\in G_n}X^{\ell(x)}\mathfrak X_{\lambda}(x)
    =
    \prod_{b\in\lambda}(X+rc(b)+r\delta_0(b)-1)
    \mathfrak X_{\lambda}(1).
  \end{equation}
\end{theorem}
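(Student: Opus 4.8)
The plan is to recognize $X^{\ld}$, for suitable integer values of $X$, as the character of an honest $G_n$-module and then to decompose that module. Both sides of \eqref{r hook chi} are polynomials in $X$ (the left-hand side because $\ell(x)\leq n$), so it suffices to verify the identity for the infinitely many positive integers $X$ with $X\equiv 1\pmod r$, where a convenient module exists.

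First I would build the module. For such $X$, let $V$ be the $Z$-module with character $\varphi_0+\tfrac{X-1}{r}\rho_{\mathrm{reg}}$, where $\rho_{\mathrm{reg}}$ is the regular character of $Z$; concretely $V=\mathbb C\oplus\mathbb C[Z]^{\oplus(X-1)/r}$. Then $\chi_V(1)=X$ while $\chi_V(\zeta^s)=1$ for $s\neq 0$. Letting $G_n$ act on $V^{\otimes n}$ in the usual wreath manner (the $S_n$-part permuting tensor factors, the base group acting factorwise), the standard cycle-product formula for such characters gives, for $x$ of type $\lambda$, $\chi_{V^{\otimes n}}(x)=\prod_{c}\chi_V(\mathrm{color}(c))=X^{\ell(x)}$, the product running over the cycles $c$ of $x$ and $\mathrm{color}(c)$ being the associated element of $Z$; the only cycles contributing a factor other than $1$ are the trivially-colored ones, which number $\ell(x)$. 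Hence $X^{\ld}=\chi_{V^{\otimes n}}$, so $\langle X^\ld,\chi_\lambda\rangle$ is the multiplicity of $\chi_\lambda$ in $V^{\otimes n}$.

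Next I would compute that multiplicity. Writing $m_k=\langle\chi_V,\varphi_k\rangle$, so $m_0=\tfrac{X-1}{r}+1$ and $m_k=\tfrac{X-1}{r}$ for $k\neq0$, I would decompose $V=\bigoplus_k V_k$ into $Z$-isotypic components of dimension $m_k$. Then $V^{\otimes n}$ is a sum over color-distributions $\sum_k n_k=n$ of modules induced from the Young subgroup $\prod_k G(r,1,n_k)$, the color-$k$ factor being $V_k^{\otimes n_k}$. On that factor $V_k^{\otimes n_k}$ is the one-dimensional color character $\varphi_k^{\otimes n_k}$ tensored with the $S_{n_k}$-module $(\mathbb C^{m_k})^{\otimes n_k}$, in which the Specht module $S^{\lambda(\varphi_k)}$ occurs with multiplicity $s_{\lambda(\varphi_k)}(1^{m_k})$ by Schur--Weyl. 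Since Macdonald's $\chi_\lambda$ is induced with multiplicity one from $\bigotimes_k\bigl(\varphi_k^{\otimes n_k}\otimes S^{\lambda(\varphi_k)}\bigr)$ with $n_k=|\lambda(\varphi_k)|$, Frobenius reciprocity yields $\langle X^\ld,\chi_\lambda\rangle=\prod_k s_{\lambda(\varphi_k)}(1^{m_k})$. Applying the hook-content formula $s_\mu(1^m)=\prod_{b\in\mu}\tfrac{m+c(b)}{h(b)}$ to each factor, the number of variables being $\tfrac{X-1}{r}+\delta_0(b)$ for $b$ in the respective block, gives exactly \eqref{r hook chi}.

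Finally, \eqref{r hook normalized} follows by dividing by the hook-length degree $\chi_\lambda(1)=n!/\prod_{b\in\lambda}h(b)$ (itself standard for $G_n$), and \eqref{r hook Fourier} follows from Schur's lemma: the central element $\csum{C}=\sum_{x} X^{\ell(x)}x$ acts on the irreducible $\mathfrak X_\lambda$ as a scalar, and taking traces identifies this scalar as $\tfrac{|G_n|}{\chi_\lambda(1)}\langle X^\ld,\chi_\lambda\rangle$ (here one uses that $\langle X^\ld,\chi_\lambda\rangle$ is real by \eqref{r hook chi}, so conjugating $\chi_\lambda$ is harmless); since $|G_n|=r^n n!$, this turns the factor $\prod_b\bigl(\tfrac{X-1}{r}+c(b)+\delta_0(b)\bigr)$ of \eqref{r hook normalized} into $\prod_b\bigl(X+rc(b)+r\delta_0(b)-1\bigr)$. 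I expect the main obstacle to be the third paragraph: pinning down Macdonald's induced construction precisely enough to match the distinguished trivial character $\varphi_0$ with the $\delta_0$-shift, while keeping the two parallel indexings — by $\Cl(Z)$ for types and by $\Irr(Z)$ for characters — consistent throughout.
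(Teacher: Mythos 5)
Your proof is correct, and while it shares the paper's overall skeleton --- establish \eqref{r hook chi} at the integer specializations $X=kr+1$ and extend by polynomiality, then obtain \eqref{r hook normalized} from the hook-length formula for $\chi_\lambda(1)$ and \eqref{r hook Fourier} from Schur's lemma applied to the central element $\sum_x X^{\ell(x)}x$ --- the engine driving the main identity is genuinely different. The paper works on the symmetric-function side: it quotes Propositions 6 and 7 of \cite{M2} for the generating function $\sum_{n\geq 0}\mathrm{ch}(\chi_{n,k})X^n=H(\varphi_0)^{k+1}\prod_{j=1}^{r-1}H(\varphi_j)^k$ and then expands each $H(\varphi)^k$ in Schur functions using the hook-content coefficients from \cite{Macdonald}. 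You instead realize $(kr+1)^{\ld}$ directly as the character of the wreath-product module $V^{\otimes n}$ with $\chi_V=\varphi_0+k\rho_{\mathrm{reg}}$, decompose by color distribution, and invoke Schur--Weyl duality together with the principal specialization $s_\mu(1^m)=\prod_{b\in\mu}\frac{m+c(b)}{h(b)}$ on each block. These are two languages for the same computation (your module is essentially what the cited generating-function identity encodes under the characteristic map), but your version is self-contained where the paper outsources the key step, and it makes transparent why the $\delta_0$-shift attaches to the component indexed by the trivial character $\varphi_0$: that is the unique isotypic component of $V$ whose multiplicity is $k+1$ rather than $k$. The one convention you rightly flag --- whether the induced construction pairs $S^{\lambda(\varphi_k)}$ with $\varphi_k$ or with its conjugate --- is harmless here, since the multiplicities $m_k$ for $k\neq 0$ all equal $\frac{X-1}{r}$, so the product $\prod_k s_{\lambda(\varphi_k)}(1^{m_k})$ is insensitive to any permutation of the nontrivial characters; your parallel observation that realness of the right-hand side of \eqref{r hook chi} disposes of the complex conjugate in the trace computation for \eqref{r hook Fourier} is likewise correct.
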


\begin{proof}
  For any non-negative integer $k$, define
  \[\chi_{n,k}(x)=(kr+1)^{\ell(x)},\quad x\in G_n.\]
  By Proposition 6 and Proposition 7 of \cite{M2}, in the standard notation, see~\cite{M2}, we have 
  \begin{equation}
    \sum_{n\geq 0}{\rm ch}(\chi_{n,k})X^n
    =
    H(\varphi_0)^{k+1}\prod_{j=1}^{r-1}H(\varphi_j)^k.
  \end{equation}
  For any $\varphi\in\Irr(Z)$, by \cite[p.\ 66]{Macdonald}, we have
  \begin{equation}
    H(\varphi)^k=\sum_{\mu\in \mathscr{P}} a_\mu s_\mu(\varphi) X^{|\mu|},
  \end{equation}
  where
  \begin{equation}
    a_\mu=\prod_{b\in \mu}\frac{k+c(b)}{h(b)}.
  \end{equation}
  Hence
  \begin{equation}
    \sum_{n\geq 0}{\rm ch}(\chi_{n,k})X^n
    =
    \sum_{\nu\in\mathscr P^r}a_{\nu}S_{\nu}X^{|\nu|},
  \end{equation}
  where
  \begin{equation}
    a_{\nu}=\prod_{b\in\nu}\frac{k+c(b)+\delta_0(b)}{h(b)}.
  \end{equation}
  Equivalently, for any $\nu\in\mathscr P^r$ with $\|\nu\|=n$,
   \begin{equation}
    \left\langle (kr+1)^{\ld},\chi_{\nu}\right\rangle =\prod_{b\in\nu}\frac{k+c(b)+\delta_0(b)}{h(b)}.
  \end{equation}
  This holds for all non-negative integers $k$, so it holds as an equality of polynomials in $\mathbb C[k]$, and upon replacing $k$ by $\frac{X-1}{r}$, we get \eqref{r hook chi}.
  
  The equality in \eqref{r hook normalized} follows from \eqref{r hook chi}, since 
  \begin{equation}
    \chi_{\lambda}(1)
    =
    \binom{n}{|\lambda^0|,|\lambda^1|,\ldots,|\lambda^{r-1}|}
    \prod_{i=0}^{r-1}\chi_{\lambda^i}(1)
    =
    n!\prod_{i=0}^{r-1}\frac{\chi_{\lambda^i}(1)}{|\lambda^i|!}
    =
    \frac{n!}{\prod_{b\in\lambda} h(b)}.
  \end{equation}
  
  For \eqref{r hook Fourier}, let $L(X)=\sum_{x\in G_n}xX^{\ell(x)}$. 
  $L(X)$ is central, so
  \begin{equation}\label{get alpha}
    \mathfrak X_{\lambda}(L(X))=\alpha\mathfrak X_{\lambda}(1)
  \end{equation}
  for some polynomial $\alpha$. 
  Taking the trace on both sides of \eqref{get alpha}
  and dividing by $\chi_{\lambda}(1)$ gives 
  \begin{equation}\label{alpha first}
    \alpha
    =
    n!r^n\left\langle X^{\ld},\frac{\chi_{\lambda}}{\chi_{\lambda}(1)}\right\rangle.
  \end{equation}
  By \eqref{r hook normalized} and \eqref{alpha first},
    \[\alpha=\prod_{b\in\lambda}\left(X-1+rc(b)+r\delta_0(b)\right).\]
\end{proof}

\subsection{}
There are four important consequences of Theorem \ref{r hook content}.
\subsubsection{}
For  $0\leq k\leq n-1$, we shall write
\[\hook_{s,k}=\chi_{(\emptyset,\ldots,\emptyset,(n-k,1^k),\emptyset,\ldots,\emptyset)},\]
where the hook-shaped partition $(n-k,1^k)$ is in position $0\leq s\leq r-1$.
\begin{proposition}\label{X hook prop}
  \begin{equation}\label{X hook eq}
    \left\langle X^{\ld},\frac{\hook_{s,k}}{\hook_{s,k}(1)}\right\rangle
    =
    \begin{cases}
      \binom{\frac{X-1}{r}+n-k}{n}
      & \text{if $s= 0$,}\\
      \binom{\frac{X-1}{r}+n-k-1}{n}
      & \text{if $s\neq 0$.}\\
    \end{cases}
  \end{equation}
\end{proposition}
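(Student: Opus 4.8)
The plan is to specialize Theorem~\ref{r hook content}, equation~\eqref{r hook normalized}, to the single-block partition-valued function $\lambda=\hook_{s,k}$, whose only nonempty partition is the hook $(n-k,1^k)$ sitting in position~$s$. Since every box $b\in\lambda$ lies in $\lambda^s$, the indicator $\delta_0(b)$ takes a single common value over all boxes; call it $\delta$, so that $\delta=1$ when $s=0$ and $\delta=0$ when $s\neq 0$. With this observation, \eqref{r hook normalized} collapses to
\[
  \left\langle X^{\ld},\frac{\hook_{s,k}}{\hook_{s,k}(1)}\right\rangle
  =\frac{1}{n!}\prod_{b}\left(\frac{X-1}{r}+c(b)+\delta\right),
\]
where the product now ranges over the boxes of the hook $(n-k,1^k)$ alone.

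The next step is to list the contents of this hook. The first row contributes contents $0,1,\ldots,n-k-1$, while the $k$ boxes below it in the first column contribute $-1,-2,\ldots,-k$; together these are exactly the $n$ consecutive integers $-k,-k+1,\ldots,n-k-1$. Writing $Y=\tfrac{X-1}{r}$, the product therefore becomes $\prod_{c=-k}^{n-k-1}(Y+c+\delta)$, a product of $n$ consecutive linear factors.

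It then remains to recognize this product as a binomial coefficient. When $s\neq 0$ (so $\delta=0$) the factors run over $Y-k,Y-k+1,\ldots,Y+n-k-1$, which is precisely $n!\binom{Y+n-k-1}{n}$; when $s=0$ (so $\delta=1$) the uniform shift by one replaces these by $Y-k+1,\ldots,Y+n-k$, equal to $n!\binom{Y+n-k}{n}$. Dividing by $n!$ and substituting back $Y=\tfrac{X-1}{r}$ yields the two cases of \eqref{X hook eq}.

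Because the whole argument is a direct specialization of Theorem~\ref{r hook content} followed by the standard content computation for a hook, I do not expect any genuine obstacle. The only point demanding care is the bookkeeping of the shift by $\delta$: it must be checked that the $s=0$ product lands on $\binom{Y+n-k}{n}$ and the $s\neq 0$ product on $\binom{Y+n-k-1}{n}$ \emph{exactly}, rather than off by one in the upper argument.
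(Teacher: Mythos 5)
Your proposal is correct and follows exactly the paper's route: the paper's proof of Proposition~\ref{X hook prop} is simply an appeal to \eqref{r hook normalized} of Theorem~\ref{r hook content}, and you have filled in the content computation for the hook $(n-k,1^k)$ accurately, including the $\delta_0$ shift.
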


\begin{proof}
By \eqref{r hook normalized} of Theorem~\ref{r hook content}.
\end{proof}

\begin{proposition}\label{phi hook prop}
  \begin{equation}\label{phi hook eq}
    \left\langle \phi_i,\frac{\hook_{s,k}}{\hook_{s,k}(1)}\right\rangle
    =
    \begin{cases}
      \delta_{ik}
      & \text{if $s= 0$,}\\
      \delta_{i, k+1}
      & \text{if $s\neq 0$.}
    \end{cases}
  \end{equation}
\end{proposition}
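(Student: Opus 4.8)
The plan is to reduce the inner product to an elementary binomial sum using the closed-form expression \eqref{G(r,1,n) expression} together with Proposition~\ref{X hook prop}. First I would use \eqref{G(r,1,n) expression} to write
\[
  \phi_i=\sum_{j=0}^{n}(-1)^{i-j}\binom{n+1}{i-j}(rj+1)^{\ld},
\]
expressing $\phi_i$ as an explicit linear combination of the class functions $(rj+1)^{\ld}$. By bilinearity the desired quantity becomes $\sum_{j}(-1)^{i-j}\binom{n+1}{i-j}\langle (rj+1)^{\ld},\hook_{s,k}/\hook_{s,k}(1)\rangle$. Since \eqref{X hook eq} is a polynomial identity in $X$, I may substitute $X=rj+1$, so that $\frac{X-1}{r}=j$, and read off each inner product directly from Proposition~\ref{X hook prop}.

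This substitution collapses both cases to a single inversion identity. For $s=0$ I obtain $\sum_{j=0}^{n}(-1)^{i-j}\binom{n+1}{i-j}\binom{j+n-k}{n}$, and for $s\neq 0$ the same sum with $\binom{j+n-k}{n}$ replaced by $\binom{j+n-k-1}{n}=\binom{j+n-(k+1)}{n}$. Hence everything will follow from
\[
  \sum_{j=0}^{n}(-1)^{i-j}\binom{n+1}{i-j}\binom{j+n-m}{n}=\delta_{im},
\]
taken with $m=k$ in the first case and $m=k+1$ in the second; the ranges $0\le k\le n-1$ and $0\le i\le n$ keep $m$ inside $\{0,1,\ldots,n\}$, so the one identity covers both.

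To prove the identity I would argue by generating functions: recognize $(-1)^{i-j}\binom{n+1}{i-j}$ as the coefficient of $X^{i-j}$ in $(1-X)^{n+1}$ and $\binom{j+n-m}{n}$ as the coefficient of $X^{j-m}$ in $(1-X)^{-(n+1)}$, both under the usual convention that the coefficient vanishes outside its natural range. The sum over $j$ is then the coefficient of $X^{i-m}$ in the Cauchy product $(1-X)^{n+1}\cdot(1-X)^{-(n+1)}=1$, which is exactly $\delta_{im}$.

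I expect the only real obstacle to be bookkeeping rather than depth. The point worth flagging is that the case $s=0$ is \emph{not} a free consequence of the decomposition \eqref{decomposition G(r,1,n)}: there $\epsilon_k=\chi_{((n-k),(1^k),\emptyset,\ldots)}$ is the ``split'' character with two nonempty components and degree $\binom{n}{k}$, whereas $\hook_{0,k}=\chi_{((n-k,1^k),\emptyset,\ldots)}$ is the single-hook character with degree $\binom{n-1}{k}$, so the two are genuinely different and the combinatorial route is needed even when $s=0$. Care is therefore required with the index conventions, so that the vanishing of binomial coefficients outside range legitimizes the Cauchy-product step, and with checking that the two specializations $m=k$ and $m=k+1$ land in the valid range.
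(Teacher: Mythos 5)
Your proof is correct and follows essentially the same route as the paper: expand $\phi_i$ via \eqref{G(r,1,n) expression}, evaluate each $\langle (rj+1)^{\ld},\hook_{s,k}/\hook_{s,k}(1)\rangle$ by specializing Proposition~\ref{X hook prop} at $X=rj+1$, and finish with the binomial inversion $\sum_{j=0}^n(-1)^{i-j}\binom{n+1}{i-j}\binom{n+j-m}{n}=\delta_{im}$, which is exactly \eqref{inverse matrices}. The only difference is that the paper cites this identity from the literature while you supply a short Cauchy-product proof of it; your range-checking and your remark that the $s=0$ case does not reduce to \eqref{decomposition G(r,1,n)} are both accurate.
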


\begin{proof}
  For $0\leq u,v\leq n$, we have \cite[Eq.\ 18]{M2}
  \begin{equation}\label{inverse matrices}
    \sum_{j=0}^n (-1)^{u-j}\binom{n+1}{u-j}\binom{n+j-v}{n}=\delta_{uv}.
  \end{equation}
  See also \cite[Eq.\ 1.6.1]{Loday} and \cite[Eqs.\ 9 and 11]{M1}.
  
By \eqref{G(r,1,n) expression}, \eqref{X hook eq}, and \eqref{inverse matrices}, 
  \begin{align*}
    \left\langle\phi_i,\frac{\hook_{s,k}}{\hook_{s,k}(1)}\right\rangle
    &=\sum_{j=0}^{n} (-1)^{i-j}\binom{n+1}{i-j}\left\langle (rj+1)^{\ld},\frac{\hook_{s,k}}{\hook_{s,k}(1)}\right\rangle\\
    &= \sum_{j=0}^{n} (-1)^{i-j}\binom{n+1}{i-j}\binom{n+j-k-1+\delta_{0s}}{n}\\
    &=\begin{cases}
      \delta_{ik}
      & \text{if $s= 0$,}\\
      \delta_{i,k+1}
      & \text{if $s\neq 0$.}
    \end{cases}
  \end{align*}
\end{proof}

\subsubsection{}
For $0\leq k\leq n$, let 
\begin{equation}
  \epsilon_k=\chi_{((n-k),(1^k),\emptyset,\emptyset,\ldots,\emptyset)}.
\end{equation}

\begin{proposition}\label{X epsilon}
  \begin{equation}\label{X epsilon eq}
    \left\langle X^\ell,\frac{\epsilon_k}{\epsilon_k(1)}\right\rangle
    =\binom{\frac{X-1}{r}+n-k}{n}.
    \end{equation}
  \end{proposition}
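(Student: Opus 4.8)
The plan is to apply equation \eqref{r hook normalized} of Theorem~\ref{r hook content} directly with $\lambda=\epsilon_k=((n-k),(1^k),\emptyset,\ldots,\emptyset)$ and simply read off the product over boxes. Writing $Y=\frac{X-1}{r}$ for brevity, I would first catalog the $n$ boxes of $\epsilon_k$ together with their contents $c(b)$ and indicators $\delta_0(b)$. The arm $\lambda^0=(n-k)$ is a single row in position $0$, so its boxes have contents $0,1,\ldots,n-k-1$ and each carries $\delta_0(b)=1$; the leg $\lambda^1=(1^k)$ is a single column in position $1$, so its boxes have contents $0,-1,\ldots,-(k-1)$ and each carries $\delta_0(b)=0$.

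Substituting into \eqref{r hook normalized}, the factor $Y+c(b)+\delta_0(b)$ runs through $Y+1,Y+2,\ldots,Y+(n-k)$ over the arm and through $Y,Y-1,\ldots,Y-(k-1)$ over the leg. Concatenating, I would observe that these are exactly the $n$ consecutive factors
\[\prod_{m=-k+1}^{\,n-k}(Y+m)=(Y+n-k)(Y+n-k-1)\cdots(Y-k+1)=n!\binom{Y+n-k}{n}.\]
Dividing by $n!$ as prescribed by \eqref{r hook normalized} then yields $\binom{\frac{X-1}{r}+n-k}{n}$, which is the assertion.

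There is no genuine obstacle here; the whole content is the correct bookkeeping of contents, and Theorem~\ref{r hook content} does the heavy lifting. The one point worth flagging is a small coincidence: this value equals the $s=0$ case of Proposition~\ref{X hook prop}, even though $\epsilon_k$ places its leg in position $1$ whereas $\hook_{0,k}$ keeps the full hook $(n-k,1^k)$ in position $0$. The agreement is explained by noting that moving the leg from position $0$ to position $1$ removes the bonus $\delta_0(b)=1$ but simultaneously raises each leg content by $1$ (the leg restarts as a fresh first column), so the combination $c(b)+\delta_0(b)$ over the leg boxes is unchanged and the two products over boxes coincide term by term. I could therefore present the result either as this standalone computation or, more economically, as an immediate consequence of the $s=0$ case of Proposition~\ref{X hook prop} together with this content-shift observation.
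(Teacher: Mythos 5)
Your proposal is correct and takes exactly the route the paper does: the paper's proof of Proposition~\ref{X epsilon} is the one-line citation ``By \eqref{r hook normalized} of Theorem~\ref{r hook content},'' and you have simply carried out the content/indicator bookkeeping for $\lambda=((n-k),(1^k),\emptyset,\ldots,\emptyset)$ explicitly, arriving at the same telescoping product $n!\binom{\frac{X-1}{r}+n-k}{n}$. Your side remark explaining why the answer agrees with the $s=0$ case of Proposition~\ref{X hook prop} (the lost $\delta_0$ bonus being exactly compensated by the content shift of the relocated leg) is a correct and worthwhile observation, though not needed for the proof.
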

  \begin{proof}
    By \eqref{r hook normalized} of Theorem~\ref{r hook content}.
  \end{proof}
  
\begin{proposition}\label{phi epsilon}
  \begin{equation}\label{phi epsilon eq}
    \left\langle \phi_i,\frac{\epsilon_k}{\epsilon_k(1)}\right\rangle
    =\delta_{ik}.
    \end{equation}
\end{proposition}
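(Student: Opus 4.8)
The plan is to prove this exactly as Proposition~\ref{phi hook prop} was proved, reducing everything to the combinatorial orthogonality relation~\eqref{inverse matrices}. First I would expand $\phi_i$ using its closed-form expression~\eqref{G(r,1,n) expression} and pull the inner product inside the sum by linearity, leaving
\begin{equation*}
  \left\langle \phi_i,\frac{\epsilon_k}{\epsilon_k(1)}\right\rangle
  =
  \sum_{j=0}^{n}(-1)^{i-j}\binom{n+1}{i-j}
  \left\langle (rj+1)^{\ld},\frac{\epsilon_k}{\epsilon_k(1)}\right\rangle.
\end{equation*}
The only input needed is thus a clean evaluation of the inner products $\langle (rj+1)^{\ld},\epsilon_k/\epsilon_k(1)\rangle$, which is precisely what Proposition~\ref{X epsilon} supplies.

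Next I would specialize the polynomial identity~\eqref{X epsilon eq} at $X=rj+1$. Since $\tfrac{X-1}{r}=j$ under this substitution, Proposition~\ref{X epsilon} gives $\langle (rj+1)^{\ld},\epsilon_k/\epsilon_k(1)\rangle=\binom{n+j-k}{n}$ for each $j$. Substituting this back, the claim becomes
\begin{equation*}
  \left\langle \phi_i,\frac{\epsilon_k}{\epsilon_k(1)}\right\rangle
  =
  \sum_{j=0}^{n}(-1)^{i-j}\binom{n+1}{i-j}\binom{n+j-k}{n},
\end{equation*}
and this is exactly the left-hand side of~\eqref{inverse matrices} with $u=i$ and $v=k$, so it equals $\delta_{ik}$. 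This finishes the computation.

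There is essentially no obstacle here; the proposition is a corollary of three already-established facts, assembled in the same pattern as Proposition~\ref{phi hook prop}, and the present case is the cleaner one because $\epsilon_k=\hook_{0,k}$ corresponds to the $s=0$ branch with no content shift. The only points requiring a moment of care are that the substitution $X=rj+1$ is legitimate because~\eqref{X epsilon eq} is an identity of polynomials in $X$, and that the index ranges match up, namely that $\epsilon_k$ is defined for the full range $0\leq k\leq n$ while the orthogonality relation~\eqref{inverse matrices} is valid for all $0\leq u,v\leq n$; both endpoints are therefore covered, so the conclusion $\langle \phi_i,\epsilon_k/\epsilon_k(1)\rangle=\delta_{ik}$ holds without exception.
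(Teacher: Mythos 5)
Your proposal is correct and follows exactly the paper's own argument: expand $\phi_i$ via \eqref{G(r,1,n) expression}, evaluate $\langle (rj+1)^{\ld},\epsilon_k/\epsilon_k(1)\rangle=\binom{n+j-k}{n}$ by specializing \eqref{X epsilon eq} at $X=rj+1$, and conclude with the orthogonality relation \eqref{inverse matrices} at $u=i$, $v=k$. No gaps.
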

\begin{proof}
  By \eqref{G(r,1,n) expression}, \eqref{X epsilon eq}, and \eqref{inverse matrices},
  \begin{align*}
    \left\langle\phi_i,\frac{\epsilon_{k}}{\epsilon_{k}(1)}\right\rangle
    &=
    \sum_{j=0}^{n} (-1)^{i-j}\binom{n+1}{i-j}\left\langle (rj+1)^{\ld},\frac{\epsilon_{k}}{\epsilon_{k}(1)}\right\rangle\\
    &=
    \sum_{j=0}^{n} (-1)^{i-j}\binom{n+1}{i-j}\binom{n+j-k}{n}\\
    &=
    \delta_{ik}.
  \end{align*}
\end{proof}

\subsection{Orthogonality relations}
\begin{definition}
  Let $\seq$ be a sequence of classes
  \[K_1,K_2,\ldots, K_m\in\Cl(G_n).\]
  Let $k_i$ be chosen uniformly at random from $K_i$, for $1\leq i\leq m$,
  and consider the expected number of ways that
  the random product $k_1k_2\ldots k_m$ can be written as $ab$ with $a\in C_i$ and $b\in C_j$, i.e.\ 
  \begin{equation}\label{General expectation}
    \mathbf E|k_1k_2\ldots k_m C_i\cap C_j|.
  \end{equation}
  For $\vartheta,\psi\in\CF_\ell(G_n)$, let
  \begin{equation}\label{a sigma inner product}
    [\vartheta,\psi]_{\seq}=\frac{1}{|G_n|}\sum_{i,j=0}^n \vartheta(C_i)\ol{\psi(C_j)}\mathbf E|k_1k_2\ldots k_m C_i\cap C_j|.
  \end{equation}
\end{definition}
Our inner product 
will be a certain convex combination
of $\emptysq_{\seq}$'s.
We shall denote the expectation in \eqref{General expectation} by 
\[\mu_{\seq}(C_i,C_j)=\mathbf E|k_1k_2\ldots k_m C_i\cap C_j|.\]

\begin{proposition}\label{a sigma inner product expression}
  \begin{equation}
    [\vartheta,\psi]_{\seq}=
    \sum_{\chi\in\Irr(G_n)}\frac{\chi(K_1)\chi(K_2)\ldots\chi(K_m)}{\chi(1)^{m-2}}
    \left\langle \vartheta,\frac{\chi}{\chi(1)}\right\rangle
    \left\langle \ol{\psi},\frac{\chi}{\chi(1)}\right\rangle.
  \end{equation}
\end{proposition}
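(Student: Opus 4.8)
The plan is to follow the template of the proof of Proposition~\ref{general [,] prop} verbatim, replacing the pair of $n$-cycle class sums $\csum{C_1}^2$ by the product $\csum{K_1}\csum{K_2}\cdots\csum{K_m}$ and the normalization $|C_1|^2$ by $|K_1||K_2|\cdots|K_m|$. First I would convert the combinatorial quantity $\mu_{\seq}(C_i,C_j)$ into the group algebra $\mathbb C[G_n]$. For fixed $g$, the number $|gC_i\cap C_j|$ counts pairs $(b,a)\in C_j\times C_i$ with $g=ba$, so it is the coefficient of $g$ in $\csum{C_j}\csum{C_i}$; since each $C_i$ is a union of conjugacy classes, the class sum $\csum{C_i}$ is central and the two factors commute. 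Averaging over the $k_\ell$ and recalling that the coefficient of $g$ in $\csum{K_1}\cdots\csum{K_m}$ counts the factorizations $g=k_1\cdots k_m$, I would write
\[
\mu_{\seq}(C_i,C_j)
=
\frac{1}{|K_1|\cdots|K_m|}\cdot\frac{1}{|G_n|}\,\trace\circ\Reg\!\left(\csum{K_1}\cdots\csum{K_m}\,\csum{C_i}\,\csum{C_j}\right),
\]
using that $\csum{C_i}\csum{C_j}$ is invariant under $g\mapsto g^{-1}$ (because each $C_i$ is closed under inverses) together with the standard identity that $\frac{1}{|G_n|}\trace\circ\Reg(w)$ equals the coefficient of the identity element in $w$. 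Substituting into the definition of $\emptysq_{\seq}$ yields the single trace $\frac{1}{|G_n|^2|K_1|\cdots|K_m|}\,\trace\circ\Reg(\sum_{i,j}\vartheta(C_i)\overline{\psi(C_j)}\csum{K_1}\cdots\csum{K_m}\csum{C_i}\csum{C_j})$, exactly parallel to the first displayed line of the proof of Proposition~\ref{general [,] prop}.

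Next I would expand $\trace\circ\Reg=\sum_{\chi\in\Irr(G_n)}\chi(1)\trace_\chi$ and use that each class sum acts in the irreducible affording $\chi$ as a scalar: $\csum{K_\ell}$ as $\chi(K_\ell)|K_\ell|/\chi(1)$, and $\csum{C_i}$ as $\frac{1}{\chi(1)}\sum_{x\in C_i}\chi(x)$. After collecting the powers of $\chi(1)$ and cancelling $\prod_\ell|K_\ell|$, the contribution of $\chi$ carries the factor $\prod_\ell\chi(K_\ell)/\chi(1)^{m-2}$ times $\bigl(\sum_i\vartheta(C_i)\sum_{x\in C_i}\chi(x)\bigr)\bigl(\sum_j\overline{\psi(C_j)}\sum_{y\in C_j}\chi(y)\bigr)$.

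The main obstacle, and the only point requiring care beyond the $S_n$ computation, is identifying these two sums with $\langle\vartheta,\chi/\chi(1)\rangle$ and $\langle\overline{\psi},\chi/\chi(1)\rangle$. Since $\vartheta$ depends only on $\ell$, one has $\sum_i\vartheta(C_i)\sum_{x\in C_i}\chi(x)=\sum_{x\in G_n}\vartheta(x)\chi(x)$; but $\langle\vartheta,\chi/\chi(1)\rangle$ involves $\overline{\chi}$, whereas here $\chi$ itself appears. The resolution is that each $C_i$ is closed under inverses, so $\sum_{x\in C_i}\chi(x)=\sum_{x\in C_i}\chi(x^{-1})=\sum_{x\in C_i}\overline{\chi(x)}$ is real, and $\chi$ may be freely replaced by $\overline{\chi}$ inside each sum. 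This is precisely where the identity $\ell(x)=\dim\ker(x-1)=\ell(x^{-1})$ enters, and it is the essential difference from type A, where all characters are already real-valued. Each sum then equals $|G_n|\chi(1)$ times the corresponding normalized inner product, and substituting back produces exactly the claimed expression for $[\vartheta,\psi]_{\seq}$.
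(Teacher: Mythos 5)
Your proof is correct and follows essentially the same route as the paper's: rewrite $[\vartheta,\psi]_{\seq}$ as $\frac{1}{|G_n|^2}\,\trace\circ\Reg$ applied to the product of the normalized class sums $\csum{K_u}$ with $\csum{C_i}\csum{C_j}$, then expand over $\Irr(G_n)$ using the scalars by which central elements act. The only difference is that you spell out the inversion-invariance of the $C_i$ (via $\ell(x)=\ell(x^{-1})$) needed to pass between $\chi$ and $\ol{\chi}$, a step the paper leaves implicit.
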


\begin{proof}
  Writing
  \[\csum{K}=\prod_{i=1}^m\frac{\csum{K_i}}{|K_i|},\]
  the right-hand side of \eqref{a sigma inner product} equals
\begin{equation}
  \frac{1}{|G_n|^2}
  \trace\circ \Reg\left(
    \sum_{i,j=0}^n \vartheta(C_i)\ol{\psi(C_j)} \csum{K} \csum{C_i}\csum{C_j}
  \right),
\end{equation}
which in turn equals
\begin{equation}
  \frac{1}{|G_n|^2}
  \sum_{i,j=1}^n
  \sum_{\chi\in\Irr(G_n)}
  \vartheta(C_i)\ol{\psi(C_j)}
  \chi(1)^2
  \omega_\chi(\csum{K}\csum{C_i}\csum{C_j}),
\end{equation}
where $\omega_\chi(W)$ denotes the scalar by which a central element
$W$ of $\mathbb C[G_n]$ acts on a module affording $\chi$, so 
\begin{equation}
  \omega_\chi(\csum{K}\csum{C_i}\csum{C_j})
  =
\left(\prod_{u=1}^m \frac{\chi(K_u)}{\chi(1)}\right)\sum_{x\in C_i}\frac{\chi(x)}{\chi(1)}\sum_{y\in C_j}\frac{\chi(y)}{\chi(1)}.
\end{equation}
Hence
\[[\vartheta,\psi]_{\seq}=
    \sum_{\chi\in\Irr(G_n)}\frac{\chi(K_1)\chi(K_2)\ldots\chi(K_m)}{\chi(1)^{m-2}}
    \left\langle \vartheta,\frac{\chi}{\chi(1)}\right\rangle
    \left\langle \ol{\psi},\frac{\chi}{\chi(1)}\right\rangle. \]
\end{proof}

\begin{proposition}\label{G(r,1,n) orth prop}
  Let $\mathfrak a=(K_1,K_2,\ldots,K_m)$ be a sequence of classes of $G_n$ 
  such that $K_1=K_\lambda$ for some $\lambda$ with $\lambda^s=(n)$ for some $s$. 
  Then 
  \begin{equation}\label{alpha orth}
   [\phi_i,\phi_j]_{\seq}
    =
    \delta_{ij}\xi_\seq(i),
  \end{equation}
  where
  \begin{equation}\label{alpha expression}
    \xi_\seq(i)={\sum_{\chi\in H_i}}\frac{\chi(K_1)\chi(K_2)\ldots \chi(K_m)}{\chi(1)^{m-2}},
  \end{equation}
  and
  \[H_i=\begin{cases}
      \{\hook_{0,0}\} & \text{if $i=0$,}\\
      \{\hook_{0,i}\}\cup\{\hook_{1,i-1},\hook_{2,i-1},\ldots, \hook_{r-1,i-1}\}
      & \text{if $0< i<n$,}\\
      \{\hook_{1,n-1},\hook_{2,n-1},\ldots,\hook_{r-1,n-1}\} &
      \text{if $i=n$.}
    \end{cases}
  \]
\end{proposition}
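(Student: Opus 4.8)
The plan is to combine the formula from Proposition~\ref{a sigma inner product expression} with the orthogonality-type relations established in Propositions~\ref{phi hook prop} and~\ref{phi epsilon}. The key observation is that Proposition~\ref{a sigma inner product expression} expresses $[\phi_i,\phi_j]_{\seq}$ as a sum over all of $\Irr(G_n)$ weighted by $\left\langle \phi_i,\tfrac{\chi}{\chi(1)}\right\rangle\left\langle \ol{\phi_j},\tfrac{\chi}{\chi(1)}\right\rangle$, so the real content lies in determining precisely which irreducible characters $\chi$ contribute, i.e.\ for which $\chi$ the inner product $\left\langle \phi_i,\tfrac{\chi}{\chi(1)}\right\rangle$ is nonzero.

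First I would compute $\left\langle \phi_i,\tfrac{\chi}{\chi(1)}\right\rangle$ for a general irreducible $\chi=\chi_\nu$. Using the expression \eqref{G(r,1,n) expression} for $\phi_i$ together with the normalized hook-content formula \eqref{r hook normalized} of Theorem~\ref{r hook content}, this inner product becomes a sum of the form $\sum_{j=0}^n (-1)^{i-j}\binom{n+1}{i-j}\cdot\tfrac{1}{n!}\prod_{b\in\nu}\left(j+c(b)+\delta_0(b)\right)$. The inversion relation \eqref{inverse matrices} shows that such a sum vanishes unless the product $\prod_{b\in\nu}\left(j+c(b)+\delta_0(b)\right)$, viewed as a polynomial in $j$ of degree $n$, is actually of the special binomial form $\binom{n+j-k}{n}$ (up to the normalization), which forces $\nu$ to be a single hook. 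Concretely, the computations already carried out in Propositions~\ref{phi hook prop} and~\ref{phi epsilon} show that $\left\langle \phi_i,\tfrac{\hook_{s,k}}{\hook_{s,k}(1)}\right\rangle$ equals $\delta_{ik}$ when $s=0$ and $\delta_{i,k+1}$ when $s\neq 0$; the main step is to argue that these hooks $\hook_{s,k}$ are the \emph{only} irreducibles giving a nonzero pairing with $\phi_i$.

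Granting that only the hook characters contribute, I would read off from Propositions~\ref{phi hook prop} and~\ref{phi epsilon} exactly which hooks pair nontrivially with a given $\phi_i$: the character $\phi_i$ pairs (with value $1$) precisely with $\hook_{0,i}$ (the $s=0$ hook of leg length $i$) and with each $\hook_{s,i-1}$ for $s\neq 0$ (the nonzero-component hooks of leg length $i-1$). This is exactly the set $H_i$ in the statement, with the boundary cases $i=0$ and $i=n$ accounting for the hooks that do not exist (when $i=0$ there is no $\hook_{s,-1}$, and when $i=n$ there is no $\hook_{0,n}$). Substituting back into Proposition~\ref{a sigma inner product expression}, only $\chi\in H_i\cap H_j$ survive, and since $H_i\cap H_j=\emptyset$ for $i\neq j$ while $H_i\cap H_i=H_i$, we obtain $[\phi_i,\phi_j]_{\seq}=\delta_{ij}\sum_{\chi\in H_i}\tfrac{\chi(K_1)\cdots\chi(K_m)}{\chi(1)^{m-2}}$, which is \eqref{alpha orth}.

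The hypothesis that $K_1=K_\lambda$ with $\lambda^s=(n)$ for some $s$ has not yet been used, and I expect \textbf{this to be the main obstacle}: it is needed to guarantee that the factors $\chi(K_1)$ do not vanish on the relevant hook characters, so that $\xi_\seq(i)$ is genuinely the nonzero norm one expects (and, in the companion orthonormality result, so that these norms can be normalized to $1$). I would verify, using the character values of $G_n$ on a class whose type is a single $n$-part $(n)$ in one component (an ``$n$-cycle-like'' class), that $\hook_{s,k}(K_\lambda)\neq 0$ for the hooks appearing in the various $H_i$; this is a Murnaghan--Nakayama-type computation for $G(r,1,n)$, and checking it uniformly across all the hooks in $H_i$ is the delicate part. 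Everything else is bookkeeping with the two vanishing lemmas already in hand.
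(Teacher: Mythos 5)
There is a genuine gap, and it is in the step you yourself flag as the main content: identifying which $\chi\in\Irr(G_n)$ survive in Proposition~\ref{a sigma inner product expression}. You locate the vanishing in the factor $\left\langle \phi_i,\frac{\chi}{\chi(1)}\right\rangle$, claiming it is zero unless $\chi$ is a hook character and proposing to extract this from the inversion relation \eqref{inverse matrices}. That claim is false. The relation \eqref{inverse matrices} only evaluates the sum when $\left\langle (rj+1)^{\ld},\frac{\chi}{\chi(1)}\right\rangle$ happens to be of the binomial form $\binom{n+j-k}{n}$; it gives no vanishing conclusion otherwise. And indeed the Foulkes characters have many non-hook irreducible constituents: since $\phi_0+\phi_1+\cdots+\phi_n=\rho$ by \eqref{G(r,1,n) reg decomposition}, every irreducible of $G_n$ occurs in some $\phi_i$, so $\left\langle \phi_i,\frac{\chi}{\chi(1)}\right\rangle\neq 0$ for plenty of non-hooks (already in type A, $\chi_{(2,2)}$ occurs in $\phi_1$ of $S_4$, coming from the two-row border strip of row lengths $2,2$).

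The factor that actually kills the non-hook terms is $\chi(K_1)$, and this is exactly where the hypothesis on $K_1$ enters: by the Ariki--Koike analogue of the Murnaghan--Nakayama rule, an irreducible $\chi_\nu$ vanishes on a class whose type is a single part $(n)$ sitting in one component unless $\nu$ is supported on a single component which is a hook, i.e.\ unless $\chi_\nu=\hook_{t,k}$ for some $t,k$. You instead assign the hypothesis the opposite role --- guaranteeing \emph{non}-vanishing of $\chi(K_1)$ on the hooks --- which is not needed here at all, since the proposition only asserts the formula \eqref{alpha expression} for $\xi_\seq(i)$, not that $\xi_\seq(i)\neq 0$ (that issue only arises later, for the specific sequences $\seq_1,\ldots,\seq_5$). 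Once the sum is correctly restricted to hooks via $\chi(K_1)$, the remainder of your argument --- reading off $H_i$ from Proposition~\ref{phi hook prop}, noting $H_i\cap H_j=\emptyset$ for $i\neq j$, and handling the boundary cases $i=0$ and $i=n$ --- coincides with the paper's proof.
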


\begin{proof}
  By Proposition~\ref{a sigma inner product expression},
  the analogue of Murnaghan--Nakayama for $G_n$ given by
  Ariki and Koike \cite{Ariki},  
  and Proposition~\ref{phi hook prop}.
\end{proof}

\begin{definition}\label{mu def}
  Let 
  $\mathfrak a_1,\mathfrak a_2,\ldots,\mathfrak a_5$ be the sequences 
  \begin{align*}
    \seq_1 &=\left(\ K_{((n),\emptyset,\emptyset,\ldots,\emptyset)}\ ,\  K_{((n),\emptyset,\emptyset,\ldots,\emptyset)}\ \right), \\
    \seq_2 &=\left(\ K_{((n),\emptyset,\emptyset,\ldots,\emptyset)}\ ,\  K_{(\emptyset,(n),\emptyset,\ldots,\emptyset)}\ \right),\\
    \seq_3 &=\left(\ K_{((n),\emptyset,\emptyset,\ldots,\emptyset)}\ ,\  K_{((n-1),(1),\emptyset,\ldots,\emptyset)}\ \right),\\
    \seq_4 &=\left(\ K_{((n),\emptyset,\emptyset,\ldots,\emptyset)}\ ,\  K_{((n),\emptyset,\emptyset,\ldots,\emptyset)}\ ,\ K_{((n-1,1),\emptyset,\ldots,\emptyset)}\ ,\ K_{((n-1,1),\emptyset,\ldots,\emptyset)}\ \right),\\
    \seq_5 &=\left(\ K_{((n),\emptyset,\emptyset,\ldots,\emptyset)}\ ,\  K_{((n),\emptyset,\emptyset,\ldots,\emptyset)}\ ,\ K_{((n-1),(1),\emptyset,\ldots,\emptyset)}\ ,\ K_{((n-1,1),\emptyset,\ldots,\emptyset)}\ \right),
  \end{align*}
  and let
  \[\mu_i=\mu_{\seq_i}.\]
  If $n=1$, so $G_n$ is cyclic, let
    \[
    \mu=
      \frac{2}{r}\mu_{1}+
      \frac{r-2}{r}\mu_{2}.
    \]
  If $n\geq 2$, let 
  \begin{equation}\label{mu ge 2}
    \mu=
      \frac{1}{r}\mu_{1}+
      \frac{r-2}{2r}\mu_{2}+
      \frac{1}{4}\mu_{3}+
      \frac{1}{2r}\mu_{4}+
      \frac{r-2}{4r}\mu_{5}.
    \end{equation}
  Define, for $\vartheta,\psi\in\CF_\ell(G_n)$,
  \begin{equation}\label{G(r,1,n) (,) def}
    [\vartheta,\psi]=\frac{1}{|G_n|}\sum_{i,j=0}^n \vartheta(C_i)\ol{\psi(C_j)}\mu(C_i,C_j).
  \end{equation}
  \end{definition}

  It should be noted that the expression for
  $\mu$ given in \eqref{mu ge 2} 
  simplifies in the case of the hyperoctahedral group.
  If $r=2$ and $n\geq 2$, then
  \begin{equation}\label{type B mu}
      \mu
      =
      \frac{1}{2}\mu_1+
      \frac{1}{4}\mu_3+
      \frac{1}{4}\mu_4.
    \end{equation}
  
  As in the case of $S_n$, we have the following properties.
  \begin{proposition} For all $0\leq i,j\leq n$, 
    \begin{equation}
      \mu(C_i,C_j)=\mu(C_j,C_i)\geq 0,
    \end{equation}
    \begin{equation}
      \sum_{i=0}^n \mu(C_i,C_j)=|C_j|,
    \end{equation}
    and
    \begin{equation}
      \sum_{i,j=0}^n\frac{\mu(C_i,C_j)}{|G_n|}=1.
    \end{equation}
  \end{proposition}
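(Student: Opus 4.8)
The plan is to reduce everything to the individual building blocks $\mu_{\seq_t}$ and then reassemble, exploiting that by Definition~\ref{mu def} the function $\mu$ is a \emph{convex} combination of them. The first order of business is the one-line check that in both cases, $n=1$ and $n\ge 2$, the coefficients are nonnegative (this uses $r\ge 2$) and sum to $1$; for $n\ge 2$ the five coefficients over the common denominator $4r$ are $4,\ 2(r-2),\ r,\ 2,\ r-2$, which total $4r$. Granting this, each of the three assertions follows once it is established for an arbitrary $\mu_\seq$, and the bookkeeping is uniform across the $\seq_t$.

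Nonnegativity is immediate: $\mu_\seq(C_i,C_j)=\mathbf E|k_1\cdots k_m C_i\cap C_j|$ is the expectation of a cardinality, hence $\ge 0$, and a nonnegative combination of nonnegative quantities is nonnegative. The subtle point is the symmetry $\mu_\seq(C_i,C_j)=\mu_\seq(C_j,C_i)$, which cannot be read off the defining expression directly but is supplied by Proposition~\ref{a sigma inner product expression}. The characteristic function $\mathbf 1_{C_i}$ of $C_i$ is real-valued and depends only on $\ell$, so $\mathbf 1_{C_i}\in\CF_\ell(G_n)$; substituting $\vartheta=\mathbf 1_{C_i}$ and $\psi=\mathbf 1_{C_j}$ into the definition of $[\,\cdot\,,\cdot\,]_\seq$ collapses the double sum to a single term, giving $\mu_\seq(C_i,C_j)=|G_n|\,[\mathbf 1_{C_i},\mathbf 1_{C_j}]_\seq$. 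By Proposition~\ref{a sigma inner product expression} the latter equals $|G_n|\sum_{\chi}\frac{\chi(K_1)\cdots\chi(K_m)}{\chi(1)^{m-2}}\langle \mathbf 1_{C_i},\frac{\chi}{\chi(1)}\rangle\langle \mathbf 1_{C_j},\frac{\chi}{\chi(1)}\rangle$, which is manifestly invariant under $i\leftrightarrow j$ because interchanging $i$ and $j$ merely swaps the two inner-product factors in each summand. Hence $\mu_\seq$, and therefore the convex combination $\mu$, is symmetric.

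For the row sums, fix $j$ and sum over $i$. Since $C_0,\dots,C_n$ partition $G_n$ and each realized product $g=k_1\cdots k_m$ is invertible, the translates $gC_0,\dots,gC_n$ again partition $G_n$, so $\sum_{i=0}^n|gC_i\cap C_j|=|G_n\cap C_j|=|C_j|$ for \emph{every} realization of $g$; taking expectations gives $\sum_i\mu_\seq(C_i,C_j)=|C_j|$. Averaging over the convex combination, and using that its coefficients sum to $1$, yields $\sum_{i=0}^n\mu(C_i,C_j)=|C_j|$, the second identity. The third is then immediate: summing over $j$ and using once more that the $C_j$ partition $G_n$ gives $\sum_{i,j}\mu(C_i,C_j)=\sum_j|C_j|=|G_n|$, and dividing by $|G_n|$ completes the proof.

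The only step that is not pure counting is the symmetry, and the genuine obstacle there is conceptual rather than computational: one must resist trying to prove it combinatorially, since the distribution of $g=k_1k_2$ is \emph{not} invariant under $g\mapsto g^{-1}$ for a sequence such as $\seq_2$ (whose second class is not self-inverse). The symmetry instead lives entirely in the character-theoretic formula of Proposition~\ref{a sigma inner product expression}, and the indicator-function substitution is precisely the device that extracts it.
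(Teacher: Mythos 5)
Your proof is correct, and since the paper's own proof is the single sentence ``These follow from the definition of $\mu$,'' your write-up is essentially a filling-in of the intended definitional argument: the convexity check of the coefficients, nonnegativity as an expectation of a cardinality, the row sums via the partition $gC_0,\ldots,gC_n$ of $G_n$, and the grand total are all exactly what the author is gesturing at. The one place you take a genuinely different route is symmetry, where you pass through Proposition~\ref{a sigma inner product expression} via indicator functions; that argument is valid, but your closing assertion that symmetry ``cannot be read off the defining expression directly'' and that one ``must resist trying to prove it combinatorially'' is mistaken. Writing $p$ for the distribution of $g=k_1k_2\cdots k_m$, one has
\begin{equation*}
\mu_{\seq}(C_i,C_j)=\sum_{a\in C_i,\; b\in C_j} p(ba^{-1}),
\end{equation*}
and two elementary facts finish the job: $p$ is a class function (conjugating every $k_t$ by a fixed $h$ is a bijection of $K_1\times\cdots\times K_m$ preserving the product's conjugacy class), and each $C_i$ is inversion-closed because $\ell(x)=\dim\ker(x-1)=\ell(x^{-1})$. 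Substituting $(a,b)\mapsto(b^{-1},a^{-1})$ and using $p(a^{-1}b)=p(ba^{-1})$ (conjugation by $a$) gives $\mu_{\seq}(C_j,C_i)=\mu_{\seq}(C_i,C_j)$ directly; no inversion-invariance of $p$ is needed, so the non-self-inverse class in $\seq_2$ is not an obstruction. Thus the symmetry does follow ``from the definition'' after all, which is presumably what the author meant; your character-theoretic detour reaches the same conclusion at the cost of invoking the Fourier-side formula, and its only real advantage is that it displays each summand of $\mu_{\seq}(C_i,C_j)$ as manifestly $i\leftrightarrow j$ symmetric character data.
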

  \begin{proof}
    These follow from the definition of $\mu$. 
  \end{proof}
  
  \begin{theorem}
    The characters $\phi_0,\phi_1,\ldots,\phi_n$ form an orthonormal basis for the
    Hilbert space $\CF_\ell(G_n)$ with inner product $\emptysq$.
  \end{theorem}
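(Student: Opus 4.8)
The plan is to reduce the entire statement to the single claim that $[\phi_i,\phi_j]=\delta_{ij}$ for all $0\le i,j\le n$. Since the $\phi_i$ are already known to form a basis of $\CF_\ell(G_n)$ via \eqref{decomposition G(r,1,n)}, once the Gram matrix of this basis equals the identity it follows automatically that the Hermitian form $\emptysq$ is positive definite, hence an inner product making $\CF_\ell(G_n)$ a Hilbert space in which the $\phi_i$ are orthonormal. So no separate verification of positive-definiteness is needed; it is a consequence of the Gram matrix being $I$.

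Orthogonality comes essentially for free. By Definition~\ref{mu def}, $\mu$ is a convex combination $\sum_k c_k\mu_{\seq_k}$, so by \eqref{G(r,1,n) (,) def} the form splits as $[\vartheta,\psi]=\sum_k c_k[\vartheta,\psi]_{\seq_k}$. Each sequence $\seq_k$ begins with the class $K_{((n),\emptyset,\ldots,\emptyset)}$, whose type has $\lambda^0=(n)$, so Proposition~\ref{G(r,1,n) orth prop} applies to every $\seq_k$ and yields $[\phi_i,\phi_j]_{\seq_k}=\delta_{ij}\xi_{\seq_k}(i)$. Summing over $k$ gives $[\phi_i,\phi_j]=0$ for $i\ne j$ and reduces the theorem to the normalization identity $\sum_k c_k\xi_{\seq_k}(i)=1$ for every $i$.

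To establish this identity I would expand $\xi_{\seq_k}(i)=\sum_{\chi\in H_i}\chi(K_1^{(k)})\cdots\chi(K_{m_k}^{(k)})/\chi(1)^{m_k-2}$ from \eqref{alpha expression} and evaluate each factor. The characters in play are the hook characters $\hook_{0,i}$ and $\hook_{s,i-1}$ with $1\le s\le r-1$ that make up $H_i$, of degrees $\binom{n-1}{i}$ and $\binom{n-1}{i-1}$, and the classes are $K_{((n),\emptyset,\ldots)}$, $K_{(\emptyset,(n),\emptyset,\ldots)}$, $K_{((n-1),(1),\emptyset,\ldots)}$, and $K_{((n-1,1),\emptyset,\ldots)}$. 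Each value $\hook_{s,k}(K)$ I would compute by the Ariki--Koike analogue of the Murnaghan--Nakayama rule \cite{Ariki}: since $\hook_{s,k}$ is supported on the single component $\varphi_s$, every cycle of the class must contribute a border strip removed from the hook $(n-k,1^k)$, a cycle of color $j$ supplying the scalar $\varphi_s(\zeta^j)=\zeta^{sj}$ times the usual border-strip sign. The decisive simplification is that the color weights coming from the positions $s=1,\dots,r-1$ assemble into geometric sums that collapse through $\sum_{s=1}^{r-1}\zeta^{s}=-1$; for instance, for $0<i<n$ one gets $\xi_{\seq_1}(i)=r$ and $\xi_{\seq_2}(i)=0$, while at the ends $i=0$ and $i=n$ these change because $H_0$ and $H_n$ omit part of the hook family.

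I expect the main obstacle to be the bookkeeping for the length-four sequences $\seq_4$ and $\seq_5$, where the denominator $\chi(1)^{m_k-2}$ equals $\binom{n-1}{i}^2$ or $\binom{n-1}{i-1}^2$ and the border-strip computations for the mixed cycle type $(n-1,1)$ must be done carefully; the key fact that makes the combination balance is that $\hook_{0,i}(K_{((n-1,1),\emptyset,\ldots)})$ vanishes for $0<i<n-1$ and is nonzero only at the extreme indices. With all the $\xi_{\seq_k}(i)$ in hand, I would substitute the coefficients of \eqref{mu ge 2} and verify $\sum_k c_k\xi_{\seq_k}(i)=1$ for $0<i<n$ and, separately, for $i=0$ and $i=n$; on the trivial character occurring at $i=0$ every $\xi_{\seq_k}(0)=1$, so the identity there is exactly the fact that the coefficients in \eqref{mu ge 2} sum to $1$. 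The $r=2$ reduction \eqref{type B mu} and the cyclic case $n=1$ (with $\mu=\tfrac{2}{r}\mu_1+\tfrac{r-2}{r}\mu_2$) provide independent checks, completing the argument.
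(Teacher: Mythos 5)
Your proposal is correct and follows essentially the same route as the paper: decompose $\emptysq$ as the convex combination of the forms $\emptysq_{\seq_k}$, invoke Proposition~\ref{G(r,1,n) orth prop} (each $\seq_k$ starts with $K_{((n),\emptyset,\ldots,\emptyset)}$) to get $[\phi_i,\phi_j]_{\seq_k}=\delta_{ij}\xi_{\seq_k}(i)$, compute the $\xi_{\seq_k}(i)$ via the Ariki--Koike Murnaghan--Nakayama rule, and check that the coefficients in \eqref{mu ge 2} make $\sum_k c_k\xi_{\seq_k}(i)=1$ for every $i$ (with the endpoint cases and small $n$ treated separately, exactly as in the paper's Tables~\ref{alpha table} and~\ref{alpha table small}). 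The sample values you predict, e.g.\ $\xi_{\seq_1}(i)=r$ and $\xi_{\seq_2}(i)=0$ for $0<i<n$, agree with the paper's tables.
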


  \begin{proof}
    The $n=1$ case is a simple calculation, so assume $n\geq 2$.
    By the expression for $\xi_\seq(i)$ in \eqref{alpha expression} and
    the analogue of Murnaghan--Nakayama for $G_n$ given by Ariki and Koike
    \cite{Ariki},
    we have the values in
    Tables~\ref{alpha table}
    and~\ref{alpha table small}
    for~$\xi_{\seq_k}(i)$. 
    \begin{table}[H]
      \centering
      \begin{tabular}{cccccc}
        \toprule
        $\seq$ & $i=0$ & $i=1$ & $2\leq i \leq n-2$ & $i=n-1$ & $i=n$\\ 
        \midrule
        $\seq_1$ & $1$ & $r$ &  $r$  & $r$ & $r-1$\\
        $\seq_2$ & $1$ & $0$ & $0$  & $0$ & $-1$\\
        $\seq_3$ & $1$ & $-1$ & $0$ & $-1$ & $1$\\
        $\seq_4$ & $1$ & $r-1$ & $0$ & $1$ & $r-1$\\
        $\seq_5$ & $1$ & $-1$ & $0$ &  $1$ & $-1$\\
        \bottomrule 
      \end{tabular}\caption{$\xi_{\seq}(i)$ for $n\geq 3$, $\seq=\seq_k$, $1\leq k\leq 5$.}\label{alpha table}
    \end{table}
    \begin{table}[H]
      \centering
      \begin{tabular}{cccccc}
        \toprule
        $\seq$ & $i=0$ & $i=1$ & $i=2$\\ 
        \midrule
        $\seq_1$ & $1$ & $r$  & $r-1$ \\
        $\seq_2$ & $1$ & $0$  & $-1$  \\
        $\seq_3$ & $1$ & $-2$ & $1$   \\
        $\seq_4$ & $1$ & $r$ & $r-1$   \\
        $\seq_5$ & $1$ & $0$ & $-1$   \\
        \bottomrule 
      \end{tabular}\caption{$\xi_{\seq}(i)$ for $n=2$, $\seq=\seq_k$, $1\leq k\leq 5$.}\label{alpha table small}
    \end{table}
  By the definition of $\emptysq$ in \eqref{G(r,1,n) (,) def},
  the orthogonality relation in \eqref{alpha orth},
  and the values in Tables~\ref{alpha table} and~\ref{alpha table small},
  we conclude that the $\phi_i$'s are an orthonormal basis for the Hilbert space
  $\CF_\ell(G_n)$ with inner product $\emptysq$.
  \end{proof}

  A natural choice of basis for  $\CF_{\ell}(G_n)$ that is composed of 
  characters is
  \[1^\ld,(r+1)^\ld,(2r+1)^\ld,\ldots,(nr+1)^\ld.\] 
  For the fact that these are characters, see Proposition 6 in \cite{M2}.
  \begin{theorem}
  The characters $\phi_0,\phi_1,\ldots,\phi_{n}$
  result from the inner product $\emptysq$
  by applying the Gram--Schmidt 
  process to the 
  characters
  \[1^{\ld},(r+1)^{\ld},\ldots, (rn+1)^{\ld}.\]
\end{theorem}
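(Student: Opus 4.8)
The plan is to follow the proof of the corresponding Type A theorem verbatim, replacing \eqref{Sn expression} by its $G_n$ analogue \eqref{G(r,1,n) expression}. First I would use \eqref{G(r,1,n) expression} to factor the character table as $\Phi = LV$, where
\[
  L = \left((-1)^{i-j}\binom{n+1}{i-j}\right)_{0\leq i,j\leq n},
  \qquad
  V = \left((ri+1)^{\ld(C_{n-j})}\right)_{0\leq i,j\leq n}.
\]
Indeed, for $0\leq i,j\leq n$,
\[
  \phi_i(C_{n-j})
  =\sum_{k=0}^n (-1)^{i-k}\binom{n+1}{i-k}(rk+1)^{\ld(C_{n-j})}
  =(LV)_{ij},
\]
so the identity $\Phi=LV$ is just \eqref{G(r,1,n) expression} read off row by row.

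Next I would record two structural facts. Since $\binom{n+1}{0}=1$ and $\binom{n+1}{i-j}=0$ for $i<j$, the matrix $L$ is lower unitriangular. And $V$ is precisely the character table of the characters $1^{\ld},(r+1)^{\ld},\ldots,(rn+1)^{\ld}$: its $i$-th row lists the values of $(ri+1)^{\ld}$ on the classes $C_n,C_{n-1},\ldots,C_0$, and these are exactly the characters to which the Gram--Schmidt process is applied, in this order. Thus $\Phi=LV$ expresses each $\phi_i$ as $(ri+1)^{\ld}$ plus a combination of the earlier characters $(rk+1)^{\ld}$ with $k<i$.

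Finally I would invoke the preceding theorem, which gives that $\phi_0,\ldots,\phi_n$ are orthonormal for $\emptysq$, together with the uniqueness in the Gram--Schmidt (equivalently $QR$) factorization. Concretely: the rows of $\Phi$ form an orthonormal basis whose transition matrix $L$ from the rows of $V$ is lower triangular with unit (hence positive) diagonal; and any two orthonormal systems related to $\{(ri+1)^{\ld}\}$ by such a transition matrix must coincide, since the change of basis between them is simultaneously lower triangular, orthogonal, and positive on the diagonal, hence the identity. Therefore the rows of $\Phi$ are exactly the output of Gram--Schmidt applied to the rows of $V$, which is the assertion. The computation is entirely mechanical once \eqref{G(r,1,n) expression} is in hand, so there is no genuine obstacle; the only points demanding a moment's care are matching the indexing so that the ordering $1^{\ld},(r+1)^{\ld},\ldots,(rn+1)^{\ld}$ corresponds to the rows of $L$ from top to bottom, and noting that the unit diagonal of $L$ is what pins down the sign ambiguity inherent in orthonormalization.
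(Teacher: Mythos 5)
Your proposal is correct and follows essentially the same route as the paper: factor $\Phi=LV$ via \eqref{G(r,1,n) expression} with $L$ lower unitriangular and $V$ the character table of the $(ri+1)^{\ld}$, then conclude from the orthonormality of the rows of $\Phi$ and the uniqueness of the Gram--Schmidt output. You merely spell out the uniqueness step (lower triangular with positive diagonal plus orthonormal forces the identity) that the paper leaves implicit.
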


\begin{proof}
  By \eqref{G(r,1,n) expression}, we have  
    $\Phi=LV$ with
  \[
    L=\left((-1)^{i-j}\binom{n+1}{i-j}\right)_{0\leq i,j\leq n},\quad
    V=\left((ri+1)^{\ell(C_{n-j})}\right)_{0\leq i,j\leq n},
  \]
  so the rows of $\Phi$ are obtained by applying the Gram--Schmidt process
  to
  the rows of $V$ using the inner product with respect to which the
  rows of $\Phi$ are orthonormal. 
\end{proof}

We include the following orthogonality relation of independent interest.
  \begin{proposition}
    Let $\sigma$ be an element of some class $K_{\lambda}$ of $G_n$
    with ${\lambda^i=(n)}$ for some $i>0$. 
    Then 
  \begin{equation}
\frac{1}{|G_n|}\sum_{u,v=0}^n\phi_i(C_u)\phi_j(C_v)|\sigma C_u\cap C_v|=(-1)^i\binom{n}{i}\delta_{ij}.
  \end{equation}
\end{proposition}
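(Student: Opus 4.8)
The plan is to recognize the left-hand side as a single-class instance of the bracket $\emptysq_{\seq}$ and then evaluate it with the machinery already in place. First I would observe that each $C_u$ is a union of conjugacy classes, hence conjugation-invariant; the substitution $x\mapsto gxg^{-1}$ then shows $|g\sigma g^{-1}C_u\cap C_v|=|\sigma C_u\cap C_v|$, so the count $|\sigma C_u\cap C_v|$ depends only on the class $K_\lambda$ of $\sigma$, not on the chosen representative. Since the Foulkes characters are real-valued by \eqref{G(r,1,n) expression}, the left-hand side is exactly $[\phi_i,\phi_j]_{\seq}$ for the length-one sequence $\seq=(K_\lambda)$, where $\mathbf E_{k_1\in K_\lambda}|k_1C_u\cap C_v|$ collapses to the constant $|\sigma C_u\cap C_v|$. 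Applying Proposition~\ref{a sigma inner product expression} with $m=1$, so that $\chi(1)^{m-2}=\chi(1)^{-1}$, gives
\[
\frac{1}{|G_n|}\sum_{u,v=0}^n\phi_i(C_u)\phi_j(C_v)|\sigma C_u\cap C_v|
=\sum_{\chi\in\Irr(G_n)}\chi(K_\lambda)\,\chi(1)\left\langle\phi_i,\frac{\chi}{\chi(1)}\right\rangle\left\langle\phi_j,\frac{\chi}{\chi(1)}\right\rangle.
\]

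Next I would determine which $\chi$ survive. By the Ariki--Koike analogue of Murnaghan--Nakayama \cite{Ariki}, a single $n$-cycle of color $\zeta^i$ forces the removal of one border strip of size $n$ from the multipartition indexing $\chi$; since the total size is $n$, this is possible only when that multipartition is a single hook $(n-k,1^k)$ placed in one component, i.e.\ $\chi=\hook_{s,k}$, in which case $\chi(K_\lambda)=(-1)^k\varphi_s(\zeta^i)=(-1)^k\zeta^{si}$ and $\hook_{s,k}(1)=\binom{n-1}{k}$. Every other irreducible, including each split character $\epsilon_k$ with $0<k<n$, contributes $0$. Substituting this into the displayed sum and invoking Proposition~\ref{phi hook prop}, the factor $\langle\phi_i,\hook_{s,k}/\hook_{s,k}(1)\rangle$ equals $\delta_{ik}$ when $s=0$ and $\delta_{i,k+1}$ when $s\neq0$, and likewise for $\phi_j$; these Kronecker deltas force $i=j$ and pin down $k$ (namely $k=i$ when $s=0$ and $k=i-1$ when $s\neq0$).

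With $i=j$ the sum collapses to
\[
\delta_{ij}\left[(-1)^i\binom{n-1}{i}+(-1)^{i-1}\binom{n-1}{i-1}\sum_{s=1}^{r-1}\zeta^{si}\right].
\]
Because $\sigma$ has color $i$ with $0<i\leq r-1$, we have $i\not\equiv0\pmod r$, so $\sum_{s=0}^{r-1}\zeta^{si}=0$ and hence $\sum_{s=1}^{r-1}\zeta^{si}=-1$. The bracket then reduces to $(-1)^i\bigl[\binom{n-1}{i}+\binom{n-1}{i-1}\bigr]$, and Pascal's rule yields $(-1)^i\binom{n}{i}\delta_{ij}$, as claimed. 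The boundary case $i=n$ is absorbed by the convention $\binom{n-1}{n}=0$: the $s=0$ term vanishes and only the $s\neq0$ strips $\hook_{s,n-1}$ survive.

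The main obstacle I anticipate is the Murnaghan--Nakayama bookkeeping in the second step. One must confirm that no non-hook irreducible contributes, so that the general expansion of Proposition~\ref{a sigma inner product expression} genuinely collapses onto the hooks $\hook_{s,k}$, and one must correctly reconcile the two labelings, of $\Cl(Z)$ by $\zeta^j$ and of $\Irr(Z)$ by $\varphi_k$, so that the color $\zeta^i$ of $\sigma$ pairs with the component index $s$ of $\hook_{s,k}$ to produce precisely the factor $\varphi_s(\zeta^i)=\zeta^{si}$. Everything downstream is the root-of-unity cancellation and Pascal's identity, which are routine.
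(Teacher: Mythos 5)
Your argument is correct and follows essentially the same route as the paper: the paper's proof simply invokes Proposition~\ref{G(r,1,n) orth prop} with $m=1$ and $K_1=K_\lambda$ together with the Ariki--Koike Murnaghan--Nakayama rule, and your derivation via Proposition~\ref{a sigma inner product expression}, the collapse onto the hooks $\hook_{s,k}$, and Proposition~\ref{phi hook prop} is exactly the content of that cited proposition specialized to $m=1$, with your final root-of-unity cancellation and Pascal's rule correctly evaluating $\xi_{\seq}(i)=(-1)^i\binom{n}{i}$. Nothing is missing.
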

\begin{proof}
  By Proposition~\ref{G(r,1,n) orth prop} with $m=1$ and $K_1=K_\lambda$,
  and using the analogue of the Murnaghan--Nakayama rule for $G_n$.
\end{proof}

\subsection{Decomposing products of Foulkes characters}
We give two solutions to computing $[\phi_i\phi_j,\phi_k]$. The first is a closed-form solution, and the second is
a combinatorial solution.
\subsubsection{First solution}
\begin{theorem}
  \begin{equation}
    [\phi_i\phi_j,\phi_k]=\sum_{{0\leq u\leq i}\atop{0\leq v\leq j}}(-1)^{i-u}(-1)^{j-v}\binom{n+1}{i-u}\binom{n+1}{j-v}\binom{ruv+u+v+n-k}{n}.
  \end{equation}
\end{theorem}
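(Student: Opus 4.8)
The plan is to mirror exactly the second solution for $S_n$, replacing the type A hook–content formula \eqref{hook content formula} with its monomial-group analogue from Theorem~\ref{r hook content}. First I would expand the product $\phi_i\phi_j$ against the Foulkes characters using the unique decomposition \eqref{decomposition G(r,1,n)}, which gives
\[
[\phi_i\phi_j,\phi_k]=\left\langle \phi_i\phi_j,\frac{\epsilon_k}{\epsilon_k(1)}\right\rangle,
\]
since the $\phi_i$ form an orthonormal basis for $\CF_\ell(G_n)$ under $\emptysq$ and the coefficient of $\phi_k$ in any $\vartheta\in\CF_\ell(G_n)$ is $\langle \vartheta,\epsilon_k\rangle/\epsilon_k(1)$.

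Next I would substitute the closed-form expression \eqref{G(r,1,n) expression} for each factor. Because $\phi_i$ and $\phi_j$ depend only on $\ell$ and $(\phi_i\phi_j)(x)=\phi_i(x)\phi_j(x)$ is again a function of $\ell(x)$, writing both factors out yields a double sum over $0\le u\le i$ and $0\le v\le j$ of terms
\[
(-1)^{i-u}(-1)^{j-v}\binom{n+1}{i-u}\binom{n+1}{j-v}\bigl((ru+1)(rv+1)\bigr)^{\ld},
\]
so the inner product becomes a weighted sum of the quantities $\left\langle \bigl((ru+1)(rv+1)\bigr)^{\ld},\epsilon_k/\epsilon_k(1)\right\rangle$. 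The key step is to evaluate this single inner product. I would do so by applying Proposition~\ref{X epsilon} (equivalently \eqref{r hook normalized}) with $X=(ru+1)(rv+1)$, noting that then $\tfrac{X-1}{r}=\tfrac{(ru+1)(rv+1)-1}{r}=ruv+u+v$, which gives
\[
\left\langle \bigl((ru+1)(rv+1)\bigr)^{\ld},\frac{\epsilon_k}{\epsilon_k(1)}\right\rangle=\binom{ruv+u+v+n-k}{n}.
\]
Assembling the double sum then produces exactly the claimed formula.

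The main obstacle, such as it is, lies in the arithmetic identity $\tfrac{(ru+1)(rv+1)-1}{r}=ruv+u+v$ and in making sure Proposition~\ref{X epsilon} may legitimately be specialized at the particular integer value $X=(ru+1)(rv+1)$: the inner products in Theorem~\ref{r hook content} are established as polynomial identities in $X$ (or equivalently in $k$) valid for all non-negative integers, so the substitution is justified precisely because $(ru+1)(rv+1)$ is a power of the form $(rw+1)$ only up to this reindexing — but since \eqref{X epsilon eq} holds as an identity of polynomials in $X$, evaluation at any value is legitimate. Everything else is the same bookkeeping as in the type A proof, with $\binom{n-1}{\,\cdot\,}$ replaced by $\binom{n}{\,\cdot\,}$ in the degrees and the content shift $\delta_0(b)$ absorbed into the fact that $\epsilon_k=\chi_{((n-k),(1^k),\emptyset,\ldots,\emptyset)}$ has its hook in position $s=0$.
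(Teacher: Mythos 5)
Your proposal is correct and follows essentially the same route as the paper: expand $[\phi_i\phi_j,\phi_k]$ as $\langle \phi_i\phi_j,\epsilon_k/\epsilon_k(1)\rangle$ via Proposition~\ref{phi epsilon}, substitute the closed form \eqref{G(r,1,n) expression} for each factor, and evaluate $\langle ((ru+1)(rv+1))^{\ld},\epsilon_k/\epsilon_k(1)\rangle$ by specializing Proposition~\ref{X epsilon} at $X=(ru+1)(rv+1)$, using $\frac{(ru+1)(rv+1)-1}{r}=ruv+u+v$. The justification that the polynomial identity may be evaluated at any $X$ is exactly the point the paper relies on as well.
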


\begin{proof}
  By \eqref{G(r,1,n) expression} and Proposition~\ref{phi epsilon},
  \begin{align*}
    [\phi_i\phi_j,\phi_k] &= \left\langle \phi_i\phi_j,\frac{\epsilon_k}{\epsilon_k(1)}\right\rangle\\
                          &= \sum_{u,v=0}^n (-1)^{i-u}(-1)^{j-v}\binom{n+1}{i-u}\binom{n+1}{j-v}\left\langle\left((ru+1)(rv+1)\right)^{\ld},\frac{\epsilon_k}{\epsilon_k(1)}\right\rangle,
  \end{align*}
  and by Proposition~\ref{X epsilon},
  \begin{equation}
    \left\langle\left((ru+1)(rv+1)\right)^{\ld},\frac{\epsilon_k}{\epsilon_k(1)}\right\rangle
    =
    \binom{ruv+u+v+n-k}{n}.
  \end{equation}
\end{proof}

\subsubsection{Second solution}
Just as for $S_n$, our combinatorial solution for
computing $[\phi_i\phi_j,\phi_k]$ is
in terms of descents and certain idempotents.
Using Steingr\'imsson's notion of descent for $G_n$ and writing
\[\csum{D_i}=\sum_{{x\in G_n}\atop{\des(x)=i}}x,\]
the Eulerian idempotents $\csum{E_0},\csum{E_1},\ldots,\csum{E_{n}}\in \mathbb Q[G_n]$ for $G_n$ are defined by
\begin{equation}\label{Eulerian Idempotents G(r,1,n)}
  \sum_{i=0}^{n}\binom{n+\frac{X-1}{r}-i}{n} \csum{D_i}=\sum_{j=0}^{n}\csum{E_{n-j}}X^{n-j},
\end{equation}
see \cite{M1} and references therein.  
The following is a special case of \cite[Thm.~9]{M1}. 
\begin{theorem}\label{transition result G(r,1,n)} 
  $\Phi^{t}$ is the transition matrix from
  \[\csum{D_0},\csum{D_1},\ldots, \csum{D_{n}}\]
  to
  \[\csum{E_{n}},\csum{E_{n-1}},\ldots, \csum{E_0},\]
  so
  \begin{equation}\label{D into E's G(r,1,n)}
    \csum{D_i}=\sum_{j=0}^{n} \phi_i(C_{n-j}) \csum{E_{n-j}}
  \end{equation}
  and 
  \begin{equation}\label{Phi inverse G(r,1,n)}
    \Phi^{-t}=\left(\text{Coeff.\ of $X^{n-j}$ in }\binom{n+\frac{X-1}{r}-i}{n}\right)_{0\leq i,j\leq n}.
  \end{equation}
\end{theorem}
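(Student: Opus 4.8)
The plan is to obtain the result as the $r>1$ case of the general transition theorem \cite{M1}, whose Type~A incarnation is Theorem~\ref{transition result} (recovered here at $r=1$, since $\binom{n+\frac{X-1}{r}-i}{n}$ reduces to $\binom{X+n-1-i}{n}$). Rather than merely cite it, I would give a self-contained derivation from the defining relation \eqref{Eulerian Idempotents G(r,1,n)} using the tools already assembled, namely the closed form \eqref{G(r,1,n) expression} and the orthogonality identity \eqref{inverse matrices}.

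First I would read off the coefficient of $X^{n-j}$ on both sides of \eqref{Eulerian Idempotents G(r,1,n)}. The right-hand side contributes exactly $\csum{E_{n-j}}$, while the left-hand side contributes $\sum_{i=0}^n \bigl([X^{n-j}]\binom{n+\frac{X-1}{r}-i}{n}\bigr)\csum{D_i}$, where $[X^{n-j}]$ denotes coefficient extraction. This shows at once that the matrix $M=\bigl([X^{n-j}]\binom{n+\frac{X-1}{r}-i}{n}\bigr)_{0\le i,j\le n}$ expresses the $\csum{E}$'s in the $\csum{D}$-basis, which is the content of \eqref{Phi inverse G(r,1,n)} once $M$ is identified with $\Phi^{-t}$.

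The substantive step is that identification, for which I would verify $M\Phi^t=I$. Expanding gives $(M\Phi^t)_{ik}=\sum_j \bigl([X^{n-j}]\binom{n+\frac{X-1}{r}-i}{n}\bigr)\phi_k(C_{n-j})$. Inserting \eqref{G(r,1,n) expression} and using $\ell(C_{n-j})=n-j$, I would interchange the order of summation; the inner sum $\sum_j \bigl([X^{n-j}]\binom{n+\frac{X-1}{r}-i}{n}\bigr)(ru+1)^{n-j}$ is just the value of $\binom{n+\frac{X-1}{r}-i}{n}$ at $X=ru+1$, which is $\binom{n+u-i}{n}$. This collapses $(M\Phi^t)_{ik}$ to $\sum_u (-1)^{k-u}\binom{n+1}{k-u}\binom{n+u-i}{n}$, equal to $\delta_{ik}$ by \eqref{inverse matrices}. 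Hence $M=\Phi^{-t}$, proving \eqref{Phi inverse G(r,1,n)}, and inverting the relation of the second paragraph yields $\csum{D_i}=\sum_j \phi_i(C_{n-j})\csum{E_{n-j}}$, i.e.\ \eqref{D into E's G(r,1,n)}.

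The genuine computational kernel is the evaluation $\binom{n+\frac{X-1}{r}-i}{n}$ at $X=ru+1$ giving $\binom{n+u-i}{n}$, which is precisely what converts the polynomial identity into the orthogonality relation \eqref{inverse matrices}; this is short. I expect the only real obstacle to be bookkeeping: keeping the two orderings of the idempotent basis ($\csum{E_n},\ldots,\csum{E_0}$ against $\csum{D_0},\ldots,\csum{D_n}$), the reindexing $j\mapsto n-j$, and the transpose all consistent, so that the kernel is applied to the correct pair of indices and the one-sided inverse $M\Phi^t=I$ indeed delivers $M=\Phi^{-t}$.
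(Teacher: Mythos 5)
Your proposal is correct, but it takes a different route from the paper: the paper offers no computation at all here, simply declaring the theorem to be a special case of Theorem~9 of \cite{M1} (mirroring how the type~A version, Theorem~\ref{transition result}, is handled). You instead derive the result from scratch out of ingredients already in the paper: reading off the coefficient of $X^{n-j}$ in the defining relation \eqref{Eulerian Idempotents G(r,1,n)} to get $\csum{E_{n-j}}=\sum_i M_{ij}\csum{D_i}$ with $M_{ij}=[X^{n-j}]\binom{n+\frac{X-1}{r}-i}{n}$, and then verifying $M\Phi^t=I$ by substituting \eqref{G(r,1,n) expression}, recognizing the inner sum over $j$ as the evaluation of the polynomial $\binom{n+\frac{X-1}{r}-i}{n}$ at $X=ru+1$, namely $\binom{n+u-i}{n}$, and invoking \eqref{inverse matrices}. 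I checked the index bookkeeping: $(M\Phi^t)_{ik}=\sum_u(-1)^{k-u}\binom{n+1}{k-u}\binom{n+u-i}{n}=\delta_{ik}$ is exactly \eqref{inverse matrices}, and $\Phi M^t=I$ (by transposing) then turns $\csum{E_{n-j}}=\sum_i M_{ij}\csum{D_i}$ into \eqref{D into E's G(r,1,n)} without needing linear independence of the $\csum{D_i}$. What your approach buys is a self-contained proof that exposes the theorem as pure linear algebra flowing from the closed form \eqref{G(r,1,n) expression} plus a binomial inversion, whereas the paper's citation is shorter and defers to the homological construction in \cite{M1}; note only that your argument treats \eqref{Eulerian Idempotents G(r,1,n)} purely as the definition of the $\csum{E_i}$'s, so it establishes the change-of-basis statements but says nothing about the idempotency or orthogonality of the $\csum{E_i}$'s, which the subsequent Theorem~\ref{G(r,1,n) combinatorial solution} still needs from \cite{M1}.
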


Our combinatorial solution is an immediate corollary of Theorem~\ref{transition result G(r,1,n)}.
\begin{theorem}\label{G(r,1,n) combinatorial solution}
  For any fixed $z\in G_n$ with exactly $k$ descents,
  \begin{equation}\label{G(r,1,n) combinatorial solution Eq}
    [\phi_i\phi_j,\phi_k]=|\{(x,y)\in G_n\times G_n \mid \des(x)=i,\ \des(y)=j,\ xy=z\}|.
  \end{equation}
\end{theorem}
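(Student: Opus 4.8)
Theorem~\ref{G(r,1,n) combinatorial solution} is the $G_n$-analogue of Theorem~\ref{Sn combinatorial solution}, and the plan is to mirror that proof exactly, with Theorem~\ref{transition result G(r,1,n)} now playing the role that Theorem~\ref{transition result} played in type~A. The central idea is that the inner product $[\phi_i\phi_j,\phi_k]$ secretly computes a structure constant in the algebra spanned by the descent-class sums $\csum{D_0},\csum{D_1},\ldots,\csum{D_n}$, and that this structure constant counts factorizations $xy=z$ with prescribed numbers of descents.

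\medskip

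The first step is to record that the $\csum{D_i}$'s span a commutative subalgebra of $\mathbb{Q}[G_n]$ (Solomon's descent algebra for $G_n$) and that the $\csum{E_i}$'s defined in \eqref{Eulerian Idempotents G(r,1,n)} form a complete system of orthogonal idempotents, so $\csum{E_a}\csum{E_b}=\delta_{ab}\csum{E_a}$. The change-of-basis formula \eqref{D into E's G(r,1,n)} from Theorem~\ref{transition result G(r,1,n)} then expresses each $\csum{D_i}$ in the idempotent basis with the Foulkes character values $\phi_i(C_{n-j})$ as coefficients. The second step is the computation proper. Writing $\csum{D_i}\csum{D_j}=\sum_k c_k\csum{D_k}$ for the product of two descent-class sums, I expand both factors into idempotents using \eqref{D into E's G(r,1,n)}, use orthogonality of the $\csum{E}$'s to collapse the double sum, and then read off the coefficient $c_k$ of $\csum{D_k}$ by inverting the transition matrix. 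Because $\Phi^t$ is the transition matrix from the $\csum{D}$'s to the reordered $\csum{E}$'s, and because by the preceding subsection the $\phi_i$'s are orthonormal with respect to $\emptysq$, this coefficient is exactly $[\phi_i\phi_j,\phi_k]$. Concretely, $[\phi_i\phi_j,\phi_k]$ equals the coefficient of $\csum{D_k}$ in $\csum{D_i}\csum{D_j}$, which is verbatim the $G_n$ version of the single displayed computation in the proof of Theorem~\ref{Sn combinatorial solution}.

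\medskip

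The final step interprets this structure constant combinatorially: the coefficient of $\csum{D_k}$ in $\csum{D_i}\csum{D_j}=\bigl(\sum_{\des(x)=i}x\bigr)\bigl(\sum_{\des(y)=j}y\bigr)$ is, for any fixed group element $z$ of descent number $k$, the number of pairs $(x,y)$ with $\des(x)=i$, $\des(y)=j$, and $xy=z$. Crucially this count is independent of which $z$ with $\des(z)=k$ one chooses, precisely because $\csum{D_i}\csum{D_j}$ lies in the span of the $\csum{D}$'s, so its coefficient on every element of $D_k$ is the same single number $c_k$. This yields \eqref{G(r,1,n) combinatorial solution Eq}, and the theorem follows as a corollary exactly as stated.

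\medskip

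I expect no genuine obstacle here, since every ingredient has been established: the main work was already done in proving Theorem~\ref{transition result G(r,1,n)} and in establishing orthonormality of the $\phi_i$'s under $\emptysq$. The only point demanding care is the bookkeeping of the reversed indexing of the idempotents ($\csum{E_n},\csum{E_{n-1}},\ldots,\csum{E_0}$ rather than the natural order) in Theorem~\ref{transition result G(r,1,n)}, together with the substitution $C_{n-j}$ in the character values; one must check that these reindexings cancel so that $[\phi_i\phi_j,\phi_k]$ lands on the coefficient of $\csum{D_k}$ and not some reflected index. This is a routine verification identical in spirit to the type~A case, which is why the theorem can legitimately be presented as an immediate corollary.
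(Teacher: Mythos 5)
Your proposal is correct and follows essentially the same route as the paper: expand the $\csum{D_i}$'s in the orthogonal idempotents $\csum{E_j}$ via \eqref{D into E's G(r,1,n)}, use orthonormality of the $\phi_i$'s under $\emptysq$ to identify $[\phi_i\phi_j,\phi_k]$ with the coefficient of $\csum{D_k}$ in $\csum{D_i}\csum{D_j}$, and read that coefficient off combinatorially as the factorization count. The paper's proof is just a terser version of the same argument.
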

\begin{proof}
  As in the proof of Theorem~\ref{Sn combinatorial solution},
  the $\csum{D_i}$'s span a subalgebra of $\mathbb C[G_n]$, and the $\csum{E_i}$'s are orthogonal idempotents,
  so by
  \eqref{D into E's G(r,1,n)}, $[\phi_i\phi_j,\phi_k]$ is the coefficient of
  $\csum{D_k}$ in $\csum{D_i}\csum{D_j}$. 
  \end{proof}

  \subsection{} We end with $G_n$ analogues of
  three earlier results for $S_n$, namely, the formula of 
  Diaconis and Fulman in Theorem~\ref{Diaconis--Fulman formula},
  the useful inversion result in Theorem~\ref{General inversion},
  and the reformulation of Holte's Markov chain for adding random numbers
  in terms of products and Foulkes characters.
\subsubsection{}
Writing 
\[A_{r,n}=\sum_{x\in G_n} X^{\des(x)},\]
the 
analogue of \eqref{Eulerian diff} is 
\begin{equation}\label{G(r,1,n) Eulerian diff}
  \left[\left(1+Y\frac{d}{dY}\right)^n\frac{1}{1-Y^r}\right]_{Y=X^{1/r}}=\frac{A_{r,n}}{(1-X)^{n+1}},
\end{equation}
and the analogue of Theorem~\ref{Diaconis--Fulman formula} is the following,
with a version of the hyperoctahedral case already appearing in
earlier work of Diaconis and Fulman~\cite{DF2}.
\begin{theorem} For $0\leq j \leq n$, 
  \begin{equation}\label{r OP type formula}
    \sum_{i=0}^{n} \phi_i(C_j) X^i= \left[(1-Y^r)^{n+1}\left(1+Y\frac{d}{dY}\right)^j\frac{1}{1-Y^r}\right]_{Y=X^{1/r}}.
  \end{equation}
\end{theorem}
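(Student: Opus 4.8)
The plan is to mirror the proof of the type A version, Theorem~\ref{Diaconis--Fulman formula}: first rewrite the right-hand side of \eqref{r OP type formula} as a transparent rational generating function, then induct on $n$, propagating the identity from $G_{n-1}$ to $G_n$ by a row recurrence for the cases $0\le j\le n-1$ and closing the top case $j=n$ with \eqref{G(r,1,n) Eulerian degrees} and \eqref{G(r,1,n) Eulerian diff}.

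First I would unwind the right-hand side. Expanding $\frac{1}{1-Y^r}=\sum_{k\ge0}Y^{rk}$ and using $(1+Y\frac{d}{dY})Y^{rk}=(rk+1)Y^{rk}$ gives $(1+Y\frac{d}{dY})^j\frac{1}{1-Y^r}=\sum_{k\ge0}(rk+1)^jY^{rk}$, so after multiplying by $(1-Y^r)^{n+1}$ and substituting $Y^r=X$ the right-hand side, which I will call $R^{(n)}_j(X)$, becomes
\[
  R^{(n)}_j(X)=(1-X)^{n+1}\sum_{k\ge0}(rk+1)^jX^k.
\]
In particular $R^{(n)}_j(X)=(1-X)\,R^{(n-1)}_j(X)$, which is what lets me pass from $G_{n-1}$ to the cases $0\le j\le n-1$ of $G_n$.

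Next I would record the row recurrence. Writing $\phi^{(n)}_i$ and $C_j^{(n)}$ for the Foulkes characters and length classes of $G_n$, and setting $\phi^{(n-1)}_{-1}=\phi^{(n-1)}_{n}=0$, I claim
\[
  \phi^{(n)}_i(C_j^{(n)})=\phi^{(n-1)}_i(C_j^{(n-1)})-\phi^{(n-1)}_{i-1}(C_j^{(n-1)}),\qquad 0\le i\le n,\ 0\le j\le n-1.
\]
For $0\le i\le n-1$ this is immediate from the closed form \eqref{G(r,1,n) expression} and Pascal's rule $\binom{n}{i-m}+\binom{n}{i-m-1}=\binom{n+1}{i-m}$, the shared factor being $(rm+1)^j$. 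The only genuinely delicate index is $i=n$: there the convention $\phi^{(n-1)}_n=0$ must be reconciled with \eqref{G(r,1,n) expression}, and doing so leaves the extra term $\sum_{m=0}^n(-1)^{n-m}\binom{n}{n-m}(rm+1)^j$, an alternating binomial sum equal to an $n$-th finite difference of a polynomial of degree $j$, which vanishes precisely because $j\le n-1$. So the restriction $j\le n-1$ is exactly what the recurrence needs.

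With these two facts the induction is routine. Assuming \eqref{r OP type formula} for $G_{n-1}$, for each $0\le j\le n-1$ I multiply $\sum_i\phi^{(n-1)}_i(C_j^{(n-1)})X^i=R^{(n-1)}_j(X)$ by $(1-X)$; the coefficient of $X^i$ on the left becomes $\phi^{(n-1)}_i(C_j^{(n-1)})-\phi^{(n-1)}_{i-1}(C_j^{(n-1)})=\phi^{(n)}_i(C_j^{(n)})$ by the row recurrence, while the right becomes $R^{(n)}_j(X)$. For the remaining case $j=n$, the class $C_n^{(n)}=\{1\}$, so the left side of \eqref{r OP type formula} is $\sum_i\phi^{(n)}_i(1)X^i=A_{r,n}$ by \eqref{G(r,1,n) Eulerian degrees}, whereas $R^{(n)}_n(X)=(1-X)^{n+1}\cdot\frac{A_{r,n}}{(1-X)^{n+1}}=A_{r,n}$ by \eqref{G(r,1,n) Eulerian diff}. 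A direct check at $n=1$ anchors the induction. The main obstacle is the boundary case $i=n$ of the row recurrence, where the finite-difference vanishing (and hence the hypothesis $j\le n-1$) is essential; everything else is the same bookkeeping as in type A. Alternatively one can bypass the induction entirely: reading off the coefficient of $X^i$ in the displayed form of $R^{(n)}_j(X)$ reproduces \eqref{G(r,1,n) expression} directly for $0\le i\le n$, and the coefficients of $X^i$ for $i>n$ vanish because $(rk+1)^j$ has degree $j\le n$ in $k$.
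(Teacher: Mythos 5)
Your proof is correct and follows essentially the same route as the paper: rewrite the right-hand side as $(1-X)^{n+1}\sum_{k\ge 0}(rk+1)^jX^k$, propagate the $j\le n-1$ cases from $G_{n-1}$ via the row recurrence \eqref{G(r,1,n) row rec}, and settle $j=n$ by \eqref{G(r,1,n) Eulerian degrees} and \eqref{G(r,1,n) Eulerian diff}. The only differences are that you derive the row recurrence (including the delicate $i=n$ boundary, correctly handled via the vanishing $n$-th finite difference) from \eqref{G(r,1,n) expression} where the paper cites \cite[Theorem~7]{M1}, and you note a valid direct alternative that reads the theorem off from \eqref{G(r,1,n) expression} without induction.
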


\begin{proof}
 The proof follows just as for $S_n$.  Denoting by $\phi_i^{(n)}$ and $C_j^{(n)}$ the $\phi_i$ and $C_j$ for  $G_n$,
  we have \cite[Theorem 7]{M1}
  \begin{equation}\label{G(r,1,n) row rec}
    \phi^{(n)}_i(C_j^{(n)})
    =
    \phi^{(n-1)}_i(C_j^{(n-1)})-\phi^{(n-1)}_{i-1}(C_j^{(n-1)})
  \end{equation}
  for $0\leq i\leq n$ and $0\leq j\leq n-1$, where we take $\phi_{-1}^{(n-1)}=\phi_{n}^{(n-1)}=0$. 
  So the $G_{n-1}$ cases of \eqref{r OP type formula} imply the first $n$ cases of \eqref{r OP type formula} for $G_n$, while
  equality holds for $j=n$ by 
  \eqref{G(r,1,n) Eulerian degrees}, which is Corollary 8.1 in \cite{M1},
  and \eqref{G(r,1,n) Eulerian diff}.
\end{proof}

\subsubsection{}
For the analogue of Theorem~\ref{General inversion}, let
\begin{equation}
  \phi=\phi_0+X\phi_1+X^2\phi_2+\ldots +X^n\phi_n.
\end{equation}

\begin{theorem}\label{r General inversion}
  For any two sequences $a_0,a_1,\ldots,a_n$ and $b_0,b_1,\ldots,b_n$, 
  \[\sum_{i=0}^n a_i X^i =\sum_{i=0}^n b_i \binom{n+\frac{X-1}{r}-i}{n}\]
  if and only if
  \[\sum_{i=0}^n a_i\phi(C_i)  =\sum_{i=0}^n b_i X^i.\]
\end{theorem}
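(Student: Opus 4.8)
The plan is to follow the proof of Theorem~\ref{General inversion} essentially verbatim, replacing the binomials $\binom{X+n-k}{n}$ by the $G_n$ binomials $\binom{n+\frac{X-1}{r}-k}{n}$ and letting all indices run from $0$ to $n$. Concretely, I would introduce the length-$(n+1)$ column vectors
\[
a=\begin{pmatrix} a_n \\ a_{n-1} \\ \vdots \\ a_0\end{pmatrix},\quad
b=\begin{pmatrix} b_0 \\ b_1 \\ \vdots \\ b_n\end{pmatrix},\quad
x=\begin{pmatrix} X^n \\ X^{n-1} \\ \vdots \\ X^0\end{pmatrix},\quad
y=\begin{pmatrix} \binom{n+\frac{X-1}{r}}{n} \\ \binom{n-1+\frac{X-1}{r}}{n} \\ \vdots \\ \binom{\frac{X-1}{r}}{n}\end{pmatrix},
\]
so that the first displayed identity of the theorem is exactly $a^t x=b^t y$, and set $z=(1,X,\ldots,X^n)^t$, which is $x$ read in reverse.

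The key input is \eqref{Phi inverse G(r,1,n)}: since the $(i,j)$ entry of $\Phi^{-t}$ is the coefficient of $X^{n-j}$ in the degree-$n$ polynomial $\binom{n+\frac{X-1}{r}-i}{n}$, forming $\Phi^{-t}x$ reassembles each of these polynomials, giving $\Phi^{-t}x=y$. Substituting this and using that the entries of $x$ (resp.\ $z$) are linearly independent monomials---so that an identity $u^t x=v^t x$ between constant row vectors $u^t,v^t$ is equivalent to $u^t=v^t$---I would run the chain
\[
a^t x=b^t y\ \Leftrightarrow\ a^t x=b^t\Phi^{-t}x\ \Leftrightarrow\ a^t\Phi^t x=b^t x\ \Leftrightarrow\ a^t\Phi^t z=b^t z,
\]
exactly as in Theorem~\ref{General inversion}. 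It then remains to recognize the two ends of this chain: the left end $a^t x=b^t y$ is the first identity of the theorem, and for the right end I would compute, using $\Phi=(\phi_i(C_{n-j}))$ and hence $\phi_k(C_i)=\Phi_{k,n-i}$, that $\phi(C_i)=(\Phi^t z)_{n-i}$, whence $\sum_{i=0}^n a_i\phi(C_i)=a^t\Phi^t z$ and $\sum_{i=0}^n b_i X^i=b^t z$; thus $a^t\Phi^t z=b^t z$ is precisely the second identity.

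I do not expect any genuine obstacle here: the whole argument is a transcription of the type A proof, the only new ingredient being the $G_n$ inverse character table \eqref{Phi inverse G(r,1,n)} in place of \eqref{Phi inverse}. The one place warranting care is the index bookkeeping---the reversal in the vector $a$, the shift $\phi_k(C_i)=\Phi_{k,n-i}$, and the fact that $\binom{n+\frac{X-1}{r}-i}{n}$ has degree exactly $n$ in $X$ (so that $\Phi^{-t}x$ really does recover all of $y$)---but these are routine verifications rather than a conceptual difficulty.
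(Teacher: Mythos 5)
Your proposal is correct and is exactly the paper's argument: the paper's proof of this theorem consists of the single sentence that it follows from \eqref{Phi inverse G(r,1,n)} just as Theorem~\ref{General inversion} followed from \eqref{Phi inverse}, and your write-up is a faithful, correctly indexed transcription of that type A proof.
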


\begin{proof}
  This follows from \eqref{Phi inverse G(r,1,n)} just as 
  Theorem~\ref{General inversion} followed from \eqref{Phi inverse}.
\end{proof}
\subsubsection{Carries in terms of products}
In \cite{DF2}, Diaconis and Fulman connected the hyperoctahedral Foulkes
characters with adding an even number $N$ of random numbers
using balanced digits and odd base $b$.
We shall denote the transition matrix
of the Diaconis--Fulman Markov chain by $M_B$, so 
\[M_B=(M_B(i,j))_{0\leq i,j\leq N}\]
with
\[M_B(i,j)=\text{chance}\left\{\text{next carry is }
    j-\frac{N}{2} \mid \text{ last carry is }i-\frac{N}{2}\right\}.\]
We rewrite this Markov chain
in terms of our inner product and products involving  
Foulkes characters of the hyperoctahedral group $B_N=G(2,1,N)$.

\begin{theorem}
  Let $\phi_0,\phi_1,\ldots,\phi_N$ be the Foulkes characters of
  $B_N$.  Then
  \[M_B(i,j)=[\phi_i,b^{\ld-N}\phi_j].\]
\end{theorem}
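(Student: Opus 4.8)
The plan is to run the proof of Theorem~\ref{Sn rewrite} line by line, replacing each type~A input by its $G(2,1,N)$ analogue proved above. Write $G=B_N=G(2,1,N)$ and let $\Phi=(\phi_i(C_{N-j}))_{0\le i,j\le N}$ be its Foulkes character table. The first ingredient is an eigenvector relation: in \cite{DF2} Diaconis and Fulman produce left eigenvectors of the balanced-digit carries chain $M_B$, and the first step is to check, exactly as in type~A, that these eigenvectors are precisely the transposed columns of $\Phi$, with eigenvalues $b^{-j}$, so that
\[
  \Phi^t M_B = D\Phi^t,\qquad D=\mathrm{diag}(1,b^{-1},b^{-2},\ldots,b^{-N}).
\]
On the class $C_{N-j}$ one has $\ell=N-j$, so the eigenvalue $b^{-j}$ is exactly the value of $b^{\ell-N}$ there, which is what makes the final formula come out with the weight $b^{\ell-N}$.

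Next I would introduce the matrix $\Lambda=(\alpha_{ij})_{0\le i,j\le N}$ with
\[
  \alpha_{ij}=\frac{1}{|G|}\sum_{x\in C_{N-j}}\frac{\epsilon_i(x)}{\epsilon_i(1)},\qquad \epsilon_i=\chi_{((N-i),(1^i),\emptyset,\ldots,\emptyset)}.
\]
By the decomposition \eqref{decomposition G(r,1,n)} together with Proposition~\ref{phi epsilon}, the product $\Lambda\Phi^t$ has $(i,j)$ entry $\langle \epsilon_i/\epsilon_i(1),\phi_j\rangle=\delta_{ij}$, so $\Lambda=\Phi^{-t}$ and hence $M_B=\Lambda D\Phi^t$. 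Carrying out this matrix product and collapsing the sum over classes into a sum over group elements (using that $x\in C_{N-k}$ forces $b^{-k}=b^{\ell(x)-N}$ and $\phi_j(C_{N-k})=\phi_j(x)$) gives
\[
  M_B(i,j)=\left\langle \frac{\epsilon_i}{\epsilon_i(1)}\,b^{\ell-N},\phi_j\right\rangle,
\]
just as in the type~A computation.

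It remains to identify this with $[\phi_i,b^{\ell-N}\phi_j]$, and this is the one place where the argument genuinely departs from type~A. In type~A, Proposition~\ref{general [,] prop} writes $\emptysq$ as a clean sum over the hook characters $\epsilon_k$ alone, so the identity is immediate; here the inner product $\emptysq$ of Definition~\ref{mu def} is a convex combination of the $\emptysq_{\seq_k}$, and its character expansion involves the whole family $\hook_{s,k}$, not merely the $\epsilon_k=\hook_{0,k}$. The way around this is to avoid expanding $\emptysq$ at all and instead invoke the orthonormality of $\phi_0,\ldots,\phi_N$ with respect to $\emptysq$ just established. Since $b^{\ell-N}$ depends only on $\ell$, the product $b^{\ell-N}\phi_j$ lies in $\CF_\ell(G)$ and so decomposes by \eqref{decomposition G(r,1,n)} as $\sum_i g_i\phi_i$ with $g_i=\langle b^{\ell-N}\phi_j,\epsilon_i/\epsilon_i(1)\rangle$ by Proposition~\ref{phi epsilon}. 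On the one hand $g_i$ equals the right-hand side displayed above; on the other hand, pairing with $\phi_i$ in the orthonormal basis gives $[\phi_i,b^{\ell-N}\phi_j]=g_i$. Here one uses that for $G(2,1,N)$ the relevant characters $\epsilon_i$ and $\phi_j$ are real-valued, so that $\langle b^{\ell-N}\phi_j,\epsilon_i/\epsilon_i(1)\rangle$ and $\langle \epsilon_i/\epsilon_i(1)\,b^{\ell-N},\phi_j\rangle$ coincide and the coefficient $g_i$ is real, reconciling the two expressions.

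I expect the main obstacle to be the very first step: confirming that the Diaconis--Fulman eigenvectors for the balanced-digit chain $M_B$ in \cite{DF2} really are the columns of the Foulkes table $\Phi$ of $B_N$ with eigenvalues $b^{-j}$, since this requires translating their normalization of carries and base into the class-function language used here (the analogue of the input quoted from \cite{DF1} in the proof of Theorem~\ref{Sn rewrite}). Once that dictionary is in place, the remaining steps are the routine matrix manipulation and the orthonormality argument sketched above.
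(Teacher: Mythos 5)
Your proposal is correct and follows essentially the same route as the paper: the eigenvector relation $\Phi_B^tM_B=D\Phi_B^t$ quoted from Diaconis--Fulman, the matrix $\Lambda$ with $\Lambda\Phi_B^t=I$ via Proposition~\ref{phi epsilon}, and the identification $M_B=\Lambda D\Phi_B^t=\left(\left\langle \frac{\epsilon_i}{\epsilon_i(1)}b^{\ell-N},\phi_j\right\rangle\right)=\left([\phi_i,b^{\ell-N}\phi_j]\right)$. The only difference is that you spell out the final equality (via the decomposition \eqref{decomposition G(r,1,n)}, the orthonormality of the $\phi_i$ under $\emptysq$, and real-valuedness), a step the paper leaves implicit; this is a useful clarification but not a different argument.
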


\begin{proof}
Denoting by $\Phi_B$ the Foulkes character table
$(\phi_i(C_{N-j}))_{0\leq i,j\leq N}$ for~$B_N$, 
Diaconis and Fulman showed that
\[\Phi_B^t M_B=D\Phi_B^t,\quad D={\rm diag}(b^0,b^{-1},\ldots, b^{-N}).\]

For $0\leq i,j\leq N$, let
\[
  \alpha_{ij}
  =
  \frac{1}{|B_N|}\sum_{x\in C_{N-j}}\frac{\epsilon_i(x)}{\epsilon_i(1)},
  \quad
  \epsilon_i
  =
  \chi_{((N-i),(1^i),\emptyset,\emptyset,\ldots,\emptyset)},
\]
and let
\[\Lambda=(\alpha_{ij})_{0\leq i,j\leq N}.\]
Then
\[
  \Lambda \Phi_B^t
  =
  \left(\left\langle \frac{\epsilon_i}{\epsilon_i(1)},\phi_j\right\rangle\right)_{0\leq i,j\leq N}
  =
  I.
\]
Hence
\[
  M_B
  =
  \Lambda D\Phi_B^t
  =
  \left(\left\langle
      \frac{\epsilon_i}{\epsilon_i(1)}b^{\ell-N},\phi_j
  \right\rangle\right)_{0\leq i,j\leq N}
  =
  \left([\phi_i,b^{\ell-N}\phi_j]\right)_{0\leq i,j\leq N}.
\]
\end{proof}

\subsection*{Acknowledgements}
The author would like to thank Jason Fulman and the referee
for several helpful comments.

\end{document}